\renewcommand{\todo}[2][]{\tikzexternaldisable\@todo[#1]{#2}\tikzexternalenable}
\pgfplotsset{compat=1.13}
\tikzset{external/only named=true}
\tikzset{%
  external/system call={%
    lualatex \tikzexternalcheckshellescape%
    -interaction=batchmode -jobname "\image" "\texsource"%
  }%
}
\newcolumntype{L}{>{$\displaystyle}l<{$}}
\theoremstyle{plain}
\newtheorem{theorem}{Theorem}[section]
\newtheorem{lemma}[theorem]{Lemma} 
\newtheorem{proposition}[theorem]{Proposition}
\newtheorem{problem}[theorem]{Problem}
\newtheorem{definition}{Definition}[section]
\begin{document}
\title[SG-HDG schemes]{A Scharfetter-Gummerl stabilization scheme for HDG approximations of convection-diffusion problems
\vskip1em
\footnotesize{\textnormal{STEFANO\hspace{3pt}PIANI,\hspace{3pt}LUCA\hspace{3pt}HELTAI,\hspace{3pt}AND\hspace{3pt}WENYU\hspace{3pt}LEI}}
}

\author[S.~Piani]{Stefano Piani}
\address{SISSA - International School for Advanced Studies, Via Bonomea 265, 34136 Trieste, Italy}
\email{\{stefano.piani, luca.heltai, wenyu.lei\}@sissa.it}


\begin{abstract}
We present a Scharfetter-Gummel (SG) stabilization scheme for high order Hybrid
Discontinuous Galerkin (HDG) approximations of convection-diffusion problems.
The scheme is based on a careful choice of the stabilization parameters that is
used to define the numerical flux in the HDG method. We show that, in one
dimension, the SG-HDG scheme is equivalent to the Finite Volume method
stabilized with the Scharfetter--Gummel on the dual grid, for all orders of HDG schemes.
\end{abstract}

\keywords{HDG methods, drift-diffusion problems, the Scharfetter-Gummel scheme, numerical fluxes}

\subjclass{65N30, 65N12}
\maketitle


\section{Introduction}\label{sec:intro}

In this paper, we consider a hybridizable discontinuous Galerkin (HDG) approximation scheme for the following convection-diffusion problems:
\begin{problem}
  \label{pb:flux-mixed}
  Let $\Omega \subset \mathbb{R}^n$ be a bounded domain with Lipschitz boundary $\Gamma$.
  Given a source term $f \in \Lspace{\Omega}$, and a Dirichlet boundary data $u_0 \in \Hspace[1/2]{\Gamma}$,
  we want to find $(\curru, u) \in \Hdiv{\Omega}\times\Hspace{\Omega}$ such that
  \begin{equation} \label{eq:intro-problem}
   \begin{cases}
     \curru + \diffcoeff \grad{u}  - u \driftcoeff = 0, & \text{ in } \Omega \eqcomma \\
     \diver{\curru} = f,  & \text{ in } \Omega \eqcomma \\
     u = u_0, & \text{ on } \Gamma 
   \end{cases}
  \end{equation}
 in the $L^2(\Omega)$ sense. 
\end{problem}

This system models the static solution of several different physical phenomena where the flux $\curru$
of an unknown quantity $u$ can be described by the combination of two different effects: transport
and diffusion. A major difficulty when considering the discretization of such systems arises in convection dominated problems, i.e., when
the P\'eclet number (the ratio between $\driftcoeff$ and $\diffcoeff$) is large.

The Scharfetter-Gummel (SG) stabilization technique \cite{scharfetter1969large}
(or exponential fitting method \cite{de1955relaxation}) combined with the Finite Volume (FV) method are considered the state-of-the-art to approximate
such problems. Given a subdivision of $\Omega$, the SG numerical flux between two adjacent cells is approximated by solving exactly the one-dimensional problem of
\eqref{eq:intro-problem} between the control points of the two cells, assuming that all coefficients are constant (see also Section~\ref{sec:sg-stabilization} for more details).
This results in a conservative scheme (cf. \cite{BessemoulinChatard2012}). Particularly in one dimensional space, and for constant convection and diffusion coefficients, the SG scheme recovers the solution $u$
exactly on grid points.

In this work, we focus on using discontinuous Galerkin (DG) methods to resolve flux conservation across cells; we refer to \cite{Arnold2002} for a unified
discussion. Our numerical method is based on HDG methods introduced by \cite{cockburn2009unified}, and used for systems of the same
nature as \Cref{eq:intro-problem} in \cite{nguyen2009implicit, nguyen2009implicitnonlinear, qiu2016hdg, chen2019hdg}. Our goal is to apply the same idea of the SG stabilization strategy to higher order approximations based on HDG schemes. One possibility is to exploit a Slotboom change of variable $\widetilde u = u \exp({-V/\diffcoeff})$, where $V$ is a potential field
such that $\driftcoeff = -\nabla V$. Various numerical methods then focus on the modified problem with respect to $\widetilde u$
including hybrid methods \cite{brezzi1989numerical, brezzi1989two, holst2008priori} originally introduced by \cite{arnold1985mixed}
(see also   \cite{LH-hdg} for another extension along this direction), edge averaged approaches
\cite{xu1999monotone, lazarov2012exponential}, and using exponential basis functions
\cite{angermann2005multidimensional,sacco1998finite}.

Similarly to what happens in other hybrid methods \cite{brezzi1989numerical, brezzi1989two}, one of the advantages of \HDG schemes is that one can
exploit hybridization (or static condensation) to eliminate all degrees of freedom defined on cells, resulting in high order finite element schemes with
a very advantageous ratio between accuracy and number of degrees of freedom.

In this paper, following the idea from the exponential fitting scheme,
we present a new \HDG stabilization strategy based on the
local HDG (L-HDG) scheme proposed by \cite{nguyen2009implicit}. Our scheme is inspired by the observation that the vertex-centered Scharfetter-Gummel scheme
solves $u$ exactly at grid points when $\diffcoeff$ and $\driftcoeff$ are constants, and we adjust the stabilization parameter in the H-LDG scheme so that it satisfies the same property in one dimension. Our main results in Theorem~\ref{thm:delta-k}, shows that such stabilization parameter exists using arbitrary degrees of polynomials. That is given a uniform subdivision of $\Omega$ in one dimensional space, with a proper selection of the stabilization parameter depending on the degree of polynomials, the mesh size and the P\'eclet number, the H-LDG approximate solution coincides with the one obtained by the SG finite volume scheme on the grid points. Formulas for such stabilization parameter for the polynomial degree at most $4$ is provided in Table~\ref{table:tau-val}.

The rest of the paper is organized as follows.
In \cref{sec:stabilization-flux-reconstruction} we introduce some notations for the H-LDG schemes as well as the SG finite volume methods. The SG stabilization based on the HDG methods is discussed in Section~\ref{sec:sg-hdg}. Our main results, the existing proof for such numerical scheme and tables the stabilization parameter with respect to the polynomial degree are also provided in this section. In Section~\ref{sec:num-examples}, we discuss some numerical experiments to justify our findings.

\section{Stabilization and flux reconstruction}
\label{sec:stabilization-flux-reconstruction}

\subsection{The \texorpdfstring{\HDG}{HDG} method}\label{subsec:hdg-method}

From what follows, we assume that the domain $\Omega$ is polytope.
Let $\{\triangulation_h\}_{h>0}$ be a family of quasi-uniform subdivisions of $\Omega$ made of
simplices with maximum size $h$. This means that which $h_T$ denoting the size of cell $T\in \mathcal T_h$ and $\rho_T$ denoting the size largest ball contained in $T$, there holds that for all $T\in \mathcal T_h$,
\[  
    h_T \le c\rho_T \le Ch 
\]
with the constants $c$ and $C$ independent of $T$.
We also denote $\faces_h$ the collection of faces of
$\triangulation_h$ and subdivide $\faces_h$ with
\begin{equation}
  \faces_h = \intfaces_h \cup \extfaces_h \eqcomma
\end{equation}
where $\intfaces_h$ and $\extfaces_h$ are the set of the interior and boundary faces, respectively. For convenience, we shall remove the subscript $h$ in the rest of the paper.

Given a non-negative integer $k$ and a cell $\cell\in\triangulation$, denote
$\polycell{k}{\cell}$ to be the Lagrange finite element space in \cell of degree at most $k$.
Set the finite element space
\begin{equation}
  \polyspace{k}{\triangulation} \Def \{
      v \in \Lspace{\Omega} : v|_\cell\in\polycell{k}{\cell} \text{ for }\cell\in\triangulation
  \} \eqdot
\end{equation}
Similarly, for each face $\face\in\faces$, we define $\polyface{k}{\face}$ to be the Lagrange
finite element space in \face of degree at most $k$ and
\begin{equation}
  \polytrace{k}{\faces} \Def \{
      \trace{v} \in \Lspace{\faces} : \trace{v}|_\face \in \polyface{k}{\face}
      \text{ for } \face \in \faces
  \} \eqdot
\end{equation}
Given $g \in \Lspace{\partial \Omega}$, denote $\polytrace[g]{k}{\faces}$ be an affine subspace of
$\polytrace{k}{\faces}$ so that for each $\face \in \extfaces$, the function
$\trace{v} \in \polytrace[g]{k}{\faces}$ satisfies that
\[
    \trace{v}|_F = \pi_\face g ,
\]
where $\pi_\face$ denotes the orthogonal projection onto $\polytrace{k}{\faces}$. Define the inner-products with respect to $\mathcal T$ and $\mathcal F$ by
\[
    (\cdot,\cdot) := \sum_{T\in\mathcal T} (\cdot,\cdot)_T
    \quad\text{and}\quad
    \langle\cdot,\cdot\rangle := \sum_{T\in\mathcal T} \langle\cdot,\cdot\rangle_{\partial T},
\]
where $(\cdot,\cdot)_T$ and $\langle\cdot,\cdot\rangle_{\partial T}$ are the $L^2$ inner products on $T$ and $\partial T$, respectively.

The \HDG[k] discretization of \eqref{pb:flux-mixed} reads: find $(u^h,\curru[h], \traceu[h]) \in
\polyspace{k}{\triangulation} \times [\polyspace{k}{\triangulation}]^n \times
\polytrace[u_0]{k}{\faces}$ satisfying that for all $(\stest, \vtest,\ttest) \in
\polyspace{k}{\triangulation} \times [\polyspace{k}{\triangulation}]^n \times
\polytrace[0]{k}{\faces}$,
\begin{equation}\label{eq:problem2_hdg_scheme}
\begin{cases}
  \Lprod{\curru[h]}{\vtest}
      - \Lprod{u^h}{\diver*{\diffcoeff \vtest}}
      - \Lprod{u^h \driftcoeff}{\vtest}
      + \Lbprod{\traceu[h]}{\diffcoeff \vtest \cdot \normal} = 0 \eqcomma \\
  - \Lprod{\curru[h]}{\nabla \stest} + \Lbprod{\curru*[h] \cdot \normal}{\stest}
      = \Lprod{f}{\stest} \eqcomma \\
  \Lbprod{\curru*[h] \cdot \normal}{\mu} = 0 \eqcomma
\end{cases}
\end{equation}
with the following numerical flux on \faces,
\begin{equation}\label{eq:tau}
  \curru*[h] \cdot \normal \Def \curru \cdot
      \normal + \tau (u - \traceu) \eqdot
\end{equation}
where $\tau$ is a positive function defined on \faces. Usually $\tau$ is in order $O(1)$ with respect to $h$. We also refer to \cite{nguyen2009implicit} for the well-posedness of discrete system in accordance with the above numerical settings. 

\subsection{The SG stabilization}
\label{sec:sg-stabilization}

The SG stabilization technique for Problem~\eqref{pb:flux-mixed} is usually applied for the simulation of the charge transportation in semiconductor devices, (\cite{scharfetter1969large}). Let us denote two adjacent cells with $T_1$ and $T_2$ and denote $F$ their shared face. We also set $\mathbf v_F$ the vector pointing from the center of $T_1$ to the center of $T_2$. The SG scheme is based on the observation that if $\diffcoeff$ and $\driftcoeff$ are constants, we can solve one dimensional problem \eqref{pb:flux-mixed} exactly along $\mathbf v_F$ and the resulting flux can be understood as the numerical flux on $F$. In practice, we consider the averages of $\diffcoeff$ and $\driftcoeff$ on $\mathbf v_F$ and denote them with $\diffcoeff_F$ and $\driftcoeff_F$. The the SG finite volume scheme reads: find a piecewice constant function $v$ on $\mathcal T$ so that
\begin{equation}\label{eq:sg}
    \sum_{T\in \mathcal T} \sum_{F\in \partial T}\int_F \mathbf J_v^{SG}\cdot\nu \, \mathrm{d}\gamma
     = \sum_T \int_T f \, \mathrm{d} \mathbf x ,
\end{equation}
where the numerical flux $\mathbf J_u^{SG}$ defined on each face $F$. Denoting $v_i$ the values of $v$ on $T_i$ for $i=1,2$, $\mathbf J_u^{SG}$ is given by
\begin{equation}\label{eq:fvm-sg}
  \int_{\face} \mathbf J_v^{SG} \cdot \normal\, \mathrm{d}\gamma
      := \frac{\measure{\face}\diffcoeff_{\face}}{l_{\face}} \left(\!
          \B{-\frac{\driftcoeff_{\face} \cdot \mathbf{v}_{\face}}{\diffcoeff_{\face}}} v_{1}
          - \B{\frac{\driftcoeff_{\face} \cdot \mathbf{v}_{\face}}{\diffcoeff_{\face}}} v_{2}
      \right) \eqcomma
\end{equation}
with $\mu(F)$ and $l_F$ denoting the measures of $F$ and $\mathbf v_F$, respectively and the function $B(\cdot)$ denotes the \emph{Bernoulli function}
\begin{equation} \label{eq:Bernoulli-def}
  \B{x} \Def \frac{x}{\mathrm{e}^x - 1} \eqdot
\end{equation}

Let us end this section by explicitly writing down the linear system of \eqref{eq:sg} in the one dimensional space. To this end, let $\Omega=(0,1)$ and consider the partition of $\Omega$ with the grid points $0=x_0<x_1<\ldots<x_N = 1$. For $i=1,\ldots,N$, we denote $v_i$ the value of the approximation $v$ in \eqref{eq:sg} in the interval $I_i = (x_{i-1},x_i)$. For $i=1,\ldots,N-1$, we also denote $s_i$ distance between the centers of $I_i$ and $I_{i}$ and set $s_0=(x_0+x_1)/2$ and $s_{N+1}=(x_N-x_{N-1})/2$. So the discrete system \eqref{eq:sg} becomes for $i=1,\ldots,N$,
\begin{equation}\label{eq:sg_system}
  \begin{multlined}[.98\displaywidth]
    - \frac{\diffcoeff}{\dhh[i - 1]} \B{- \frac{\driftscalar \dhh[i-1]}{\diffcoeff}} \fvmu{i-1} +
        \left[
          \frac{\diffcoeff}{\dhh[i - 1]} \B{\frac{\driftscalar \dhh[i-1]}{\diffcoeff}}
          + \frac{\diffcoeff}{\dhh[i]} \B{- \frac{\driftscalar \dhh[i]}{\diffcoeff}}
        \right] \fvmu{i} \\[10pt] -
    \frac{\diffcoeff}{\dhh[i]} \B{\frac{\driftscalar \dhh[i]}{\diffcoeff}} \fvmu{i + 1}
        = (x_i-x_{i-1}) f_i ,
  \end{multlined}
\end{equation}
where $f_i$ denotes the average of $f$ in $(x_{i_1},x_i)$. It is worth noting that for piecewise constant coefficients and right hand side data, the numerical scheme \eqref{eq:fvm-sg} approximates the solution $u$ exactly on the centers of cells in $\Omega$, denoted by $\{x_i\}$, i.e.
\[ 
    u(x_i) = v_i . 
\]





\section{Scharfetter--Gummel stabilization for \texorpdfstring{\HDG}{HDG} methods}\label{sec:sg-hdg}
In what follows, we assume that $\diffcoeff$, $\driftscalar$,
and $f$ are constants. For simplicity, we further assume that $\Omega$ is a unit interval, i.e $\Omega=(0,1)$. Our goal in this section is to find a suitable stabilization parameter $\tau$ so that the HDG trace approximation in \eqref{eq:problem2_hdg_scheme}, i.e. $\hat u^h$ on the skeleton of $\mathcal T$, coincides with the SG approximation $v$ defined on a dual (or staggered) grid of $\mathcal T$. Let us first provide the definition of the dual grid.

\begin{definition}
  Let \triangulation be a uniform triangulation of $\Omega$ with $N$ cells and let
  $\{x_i\}_{i=0}^{N}$ be a set of all its faces, so that $x_0 \Def 0$,
  $x_{N} \Def 1$ and for every $i \in \{1, \ldots, N\}$,
  \begin{equation}
   x_{i} - x_{i - 1} = \nicefrac{1}{N} \eqdot
  \end{equation}
  Let $\left\{y_i\right\}_{i=0}^{N + 1}$ be a collections of points satisfying that
  \begin{equation}
    y_0 \Def x_0, \quad
    y_{N+1} \Def x_N,\quad\text{and}\quad
    y_i \Def \frac{x_{i} + x_{i - 1}}{2}\text{ for } i \in \{1, \ldots, N \} .
  \end{equation}
  We call the \emph{dual triangulation} \dualt of \triangulation the collection of cells
  $\dcell_i = \left( y_{i}, y_{i+1} \right)$ for $i=0,\ldots,N$.
\end{definition}

Clearly, $\{x_i\}_{i=1}^{N - 1}$ are the centers of the cells $\dcell_i$ and we will approximate \eqref{eq:intro-problem} with the SG scheme on the dual grid $\mathcal T^d$. In order to simplify our argument, we say that $x_0$ and $x_N$ are also the
centers of $\dcell_0$ and $\dcell_{N}$, respectively. 


\begin{definition}\label{def:dual}
 Let \triangulation be uniform with mesh size $h=\tfrac1N$. We also let
  $\tracemd{i} \Def \{\traceu[h](x_i)\}_{i=0}^{N}$ with $\traceu[h]$ denoting the \HDG approximation on the trace according to \eqref{eq:problem2_hdg_scheme}.
  Set $\fvmu{}^h$ to be a SG approximation on \dualt based on \eqref{eq:sg} and set $\fvmu{i} =
  \fvmu{}^h(x_i)$. We say that the \HDG method is \emph{\equivalent} to the finite
  volume method if for every $i \in \{0, \ldots, N\}$,
  \[  \tracemd{i} = \fvmu{i} \eqdot \]
\end{definition}

In this section, we shall show the following main result.
\begin{theorem}\label{thm:delta-k}
For every degree $k > 0$, there exists a value $\tau_k$ such that the \HDG[k]
method with the stabilization parameter $\tau= \tau_k$ is \equivalent to the
Scharfetter--Gummel scheme.
\end{theorem}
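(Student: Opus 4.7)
The strategy is to reduce both systems to constant-coefficient three-term recurrences on the nodal values and then match their characteristic polynomials. The uniformity of the mesh, together with the constancy of $\diffcoeff$, $\driftscalar$, and $f$, makes every cell identical up to translation, so the resulting recurrences have coefficients independent of the cell index.

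First, I would carry out a cell-local static condensation. On a cell $T_i=(x_{i-1},x_i)$ of length $h=1/N$, the first two equations of~\eqref{eq:problem2_hdg_scheme} restricted to $T_i$ form a well-posed linear system of dimension $2(k+1)$ that determines $u^h|_{T_i}$ and $q^h|_{T_i}$ as affine functions of $(\hat u_{i-1},\hat u_i,f)$, where $\hat u_j$ denotes the trace value at $x_j$. Substituting the resulting numerical flux $q^h\cdot n+\tau(u^h-\hat u^h)$ into the conservation condition at the interior node $x_i$ produces a translation-invariant three-term recurrence
\[
  a_k(\tau)\,\hat u_{i-1}\;+\;b_k(\tau)\,\hat u_i\;+\;c_k(\tau)\,\hat u_{i+1}\;=\;d_k(\tau)\,h\,f,
\]
whose coefficients depend only on $\tau$, $k$, and the mesh P\'eclet number $\mathrm{Pe}=\driftscalar h/\diffcoeff$.

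Next, I would identify the characteristic roots. Consistency of the HDG scheme for constant trace data when $f=0$ forces $a_k(\tau)+b_k(\tau)+c_k(\tau)=0$, so $r=1$ is always a root of the associated characteristic polynomial, leaving a single nontrivial root $r_k(\tau)$ to be determined. A direct computation from \eqref{eq:sg_system} on a uniform mesh shows that the SG recurrence has characteristic roots $1$ and $e^{\mathrm{Pe}}$. For constant $f$, the exact solution $u(x)=A+Be^{\driftscalar x/\diffcoeff}+fx/\driftscalar$ is a superposition of these two modes plus an affine term that both schemes reproduce exactly; together with identical Dirichlet data, the equivalence of~\Cref{def:dual} therefore reduces to the single scalar condition
\[
  r_k(\tau_k)\;=\;e^{\mathrm{Pe}}.
\]

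The main obstacle, and the real content of the theorem, is to show that this equation admits a positive solution $\tau_k$ for every $k\ge 1$. I would express $r_k(\tau)$ as a rational function of $\tau$ by writing the local HDG solver in a Legendre basis on the reference cell; its numerator and denominator are polynomials in $\tau$ of degree at most $k+1$ whose coefficients are explicit functions of $\mathrm{Pe}$. Monotonicity of $r_k(\tau)$ in $\tau$, combined with the limiting values $r_k(0^+)$ (the pure-Galerkin root) and $r_k(\infty)$ (the upwind-limit root) bracketing $e^{\mathrm{Pe}}$ for every $\mathrm{Pe}\neq 0$, would then yield both existence and uniqueness of $\tau_k>0$. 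The entries of Table~\ref{table:tau-val} for $k\le 4$ are obtained by solving this rational equation symbolically.
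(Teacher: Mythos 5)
Your overall architecture matches the paper's: static condensation to a translation-invariant three-term recurrence in the trace unknowns, the consistency relation $a_k(\tau)+b_k(\tau)+c_k(\tau)=0$, and the reduction of the equivalence to a single scalar equation in $\tau$. The paper's lemmas (the tridiagonal Toeplitz structure with constant right-hand side, the normalization $r=-1$, and the relations $c_1+c_2+c_3=0$, $c_1(\dtau,-\peclet)=c_3(\dtau,\peclet)$, $c_3-c_1=-\peclet$) play the role of your recurrence setup and of your observation that both schemes reproduce the constant and affine modes. Note, however, that matching the nontrivial characteristic root only fixes $(a_k,b_k,c_k)$ up to a common scalar multiple; to match the inhomogeneous SG equation you also need to pin the overall normalization against the right-hand side. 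The paper gets this from $r=-1$ together with $c_3-c_1=-\peclet$; the latter is exactly your ``affine term is reproduced exactly'' claim, which requires the separate verification the paper supplies (and a direct computation for $k=0$).

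The genuine gap is in your existence argument. You bound the $\tau$-degrees of the numerator and denominator of $r_k(\tau)$ by $k+1$ and then invoke monotonicity of $r_k$ in $\tau$ together with a bracketing of $e^{\peclet}$ between $r_k(0^+)$ and $r_k(\infty)$. Neither claim is proved, and neither is obvious: at $\tau=0$ the local solver matrix $\widetilde{\mathbf A}$ is singular (the paper points this out in \Cref{thm:dtau-degree}), so the ``pure-Galerkin root'' $r_k(0^+)$ is delicate, and for a rational function of degree up to $k+1$ the equation $r_k(\tau)=e^{\peclet}$ could a priori have several positive solutions or none for some values of $\peclet$. The paper closes exactly this gap with \Cref{thm:dtau-degree}: a Cramer's-rule argument --- using that $\dtau$ enters only two entries of the local matrix and that the determinant vanishes at $\dtau=0$, so a factor of $\dtau$ cancels --- shows that $c_1$ is a ratio of polynomials of degree at most \emph{one} in $\dtau$. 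Combined with the parity relations this yields the closed form \eqref{eq:c1-coeff-structure}, so the matching condition $c_1=\B{-\peclet}$ is a linear equation in $\dtau$ solved explicitly by \eqref{eq:dtau-def}; no monotonicity or intermediate-value argument is needed, and uniqueness comes essentially for free (cf.\ \Cref{thm:unique-tau}). To repair your proof you would either have to establish this degree-one structure yourself --- at which point you have reproduced the paper's argument --- or supply actual proofs of the monotonicity and bracketing claims for general $k$ and $\peclet$, which is precisely the hard part you have assumed away.
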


The idea of the proof is to investigate the linear system for $\hat u^h$ which can be obtained by the static condensation. To this end, for each cell $\cell \in \triangulation$, denote
$\{\phi_i\}_{i=0}^{k}$ the set of shape functions in $\polycell{k}{\cell}$. For the
approximation $u^h$ in $\cell$, we set
$u^h \Def \sum_{i=0}^{k} u_i\phi_i$ with the coefficient vector $\mathbf u :=(u_0,\ldots,u_k)^T$.
Similarly, we set the approximation of the current $\curru[h] := \sum_{i=0}^{k} J_i\phi_i$
for some coefficient vector $\mathbf J := (J_0,\ldots, J_k)^T$.

For our proof, we need to introduce some constants that identify the properties of our problem.
First of all, we define the \emph{mesh Peclet number} as
\begin{equation}\label{eq:peclet}
  \peclet \Def \frac{\driftscalar h}{\diffcoeff} \eqdot
\end{equation}
This definition is analogous to the one that can be found, for example, in \cite{Quarteroni2017},
beside a factor two. Indeed, we usually have the following definition:
\begin{equation} \mpeclet \Def \frac{\driftscalar h}{2 \diffcoeff} \end{equation}
In our case, the constant 2 would increase the complexity of the computations and, therefore, we
omit it.
Finally, we define the constant
\begin{equation} \dtau \Def \frac{\tau h}{\diffcoeff} \eqdot \end{equation}
We also denote $\Lambda = \{c_{i,j}\}_{i,j=0}^N$ the system matrix for $\hat u^h$ and $\mathbf{r}$
the right hand side vector. The static condensation indicates that on each cell $T$, $\mathbf u$ and $\mathbf J$ are functions of the boundary values $\hat u^h$. According to transmission condition (the third equation in \eqref{eq:problem2_hdg_scheme}), we obtain that for $|i-j|>1$,
$c_{i,j}=0$.  The following lemma shows that for $|i-j|\le 1$ , $c_{i,j}$ is a function of
$\dtau$ and the Peclet number $\peclet$.

\begin{lemma}\label{c-coeffs}
The matrix $\Lambda$ is a tridiagonal Toeplitz matrix
\begin{equation}\label{eq:Lambda}
  \Lambda = %
  \frac{\diffcoeff}{h}\begin{pmatrix}
    c_2 & c_3 \\
    c_1 & c_2 & c_3  \\
        & c_1 & \ddots & \ddots \\
        &     & \ddots & \ddots & c_3 \\
        &     &        & c_1     & c_2
  \end{pmatrix}
\end{equation}
whose coefficients $c_1$, $c_2$ and $c_3$ depend only on $\dtau$ and $\peclet$. Moreover, there
exists a coefficient $r = r\left(\dtau, \peclet\right)$ such that the
right hand side vector $\mathbf{r}$ is a constant vector whose entries are all equal to
\begin{equation} (\mathbf{r})_i = h f r \eqdot \end{equation}
\end{lemma}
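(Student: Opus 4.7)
The plan is to obtain the global trace system by static condensation on each cell and then use translation invariance of the local problem.

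First, I would work on a single cell $T_i = (x_{i-1}, x_i)$. The first two equations of \eqref{eq:problem2_hdg_scheme}, tested against the shape functions $\{\phi_j\}_{j=0}^{k}$ on $T_i$, form a $(2k+2)\times(2k+2)$ linear system whose unknowns are the local coefficient vectors $\mathbf u_i$ and $\mathbf J_i$ and whose data are the constant $f$ together with the two boundary trace values $\hat u^h(x_{i-1})$ and $\hat u^h(x_i)$ (which enter through the $\langle \hat u^h,\diffcoeff\mathbf v\cdot\nu\rangle$ term and through the numerical flux \eqref{eq:tau}). Invertibility of this local block is guaranteed by the well-posedness result recalled after \eqref{eq:tau}, so one can solve it to write $\mathbf u_i$ and $\mathbf J_i$ as affine functions of $\hat u^h(x_{i-1}),\hat u^h(x_i),f$.

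Second, to expose the dependence on the dimensionless groups, I would pull the local problem back to the reference cell $\widehat T=(0,1)$ via $x = x_{i-1} + h\xi$. Under this map $\partial_x\mapsto h^{-1}\partial_\xi$ and the mass terms acquire an $h$ factor; dividing the resulting system through by $\diffcoeff$ isolates exactly the two dimensionless numbers $\peclet = \driftscalar h/\diffcoeff$ and $\dtau=\tau h/\diffcoeff$. The pulled-back matrix therefore depends only on $(\peclet,\dtau)$, and, crucially, the pullback is identical on every cell (because $\triangulation$ is uniform and $\diffcoeff,\driftscalar,f$ are constants), so the affine coefficients expressing $(\mathbf u_i,\mathbf J_i)$ in terms of $(\hat u^h(x_{i-1}),\hat u^h(x_i),f)$ do not depend on $i$.

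Third, I would assemble. Substituting the condensed expression for $\mathbf J_i$ into the numerical flux \eqref{eq:tau} and testing the transmission condition in \eqref{eq:problem2_hdg_scheme} against a trace basis function supported at an interior vertex $x_i$ yields an equation involving only $\hat u^h(x_{i-1}),\hat u^h(x_i),\hat u^h(x_{i+1})$, because that trace function is supported on $\partial T_i\cup\partial T_{i+1}$. This gives the tridiagonal pattern $c_{i,j}=0$ for $|i-j|>1$, and the translation invariance from Step 2 forces the three nonzero entries on every row to coincide with three numbers $c_1,c_2,c_3$ that only depend on $(\peclet,\dtau)$, which is the Toeplitz form \eqref{eq:Lambda}. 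The overall $\diffcoeff/h$ prefactor appears when undoing the nondimensionalization of Step 2. Applying the same reasoning to the part of $\mathbf J_i$ that depends affinely on $f$ produces a constant vertex contribution proportional to $hf$, whose dimensionless coefficient is the desired $r(\dtau,\peclet)$.

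The main obstacle is a bookkeeping one rather than a conceptual one: carefully tracking the powers of $h$ and $\diffcoeff$ through the pullback in Step 2 so that every entry of the local system is visibly a function of $(\peclet,\dtau)$ alone, and verifying that no cell-dependent quantity survives into the assembled stiffness matrix. Existence and structural dependence, which is all that Lemma~\ref{c-coeffs} asserts, then follow without ever inverting the local $(2k+2)\times(2k+2)$ block symbolically; that symbolic inversion is deferred to the proof of Theorem~\ref{thm:delta-k}, where explicit formulas for $c_1,c_2,c_3,r$ will be needed.
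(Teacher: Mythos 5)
Your proposal is correct and follows essentially the same route as the paper: solve the local cell system, nondimensionalize so that only $\peclet$ and $\dtau$ survive, and then condense onto the trace via the transmission condition, with uniformity of the mesh giving the Toeplitz structure. The only cosmetic difference is that you realize the nondimensionalization by pulling back to a reference cell, whereas the paper rescales the unknowns directly ($\mathbf q = h\mathbf J$, $\mathbf v = \diffcoeff\mathbf u$); the two devices are equivalent here.
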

\begin{proof}
  Based on \eqref{eq:problem2_hdg_scheme} we write the local discrete system on
  $\cell\in\triangulation$ by $\mathbf A_\cell \mathbf X_\cell = \mathbf b_\cell$. Here
  \begin{equation}\label{eq:local-blocks}
  \mathbf X_\cell = \begin{pmatrix}
                        \mathbf J \\ \mathbf u\\
                    \end{pmatrix},
  \quad
  \mathbf b_\cell = \begin{pmatrix}
                        \mathbf b_1 \\ \mathbf b_2\\
                    \end{pmatrix}
  \text{ and }\quad
  \mathbf{A}_\cell = \begin{pmatrix}
                      \mathbf{A}_{11} && \mathbf{A}_{12} \\
                      \mathbf{A}_{21} && \mathbf{A}_{22} \\
                     \end{pmatrix}
  \end{equation}
  where
  \begin{itemize}
    \item $\mathbf{A}_{11} = \{\Lprod{\phi_j}{\phi_i}\}_{i,j=0}^{k}$. So the elements of this
    matrix scale linearly with $h$.
    \item $\mathbf{A}_{12} = \{- \Lprod{\phi_{j}}{\diver*{\diffcoeff \phi_i}}
    -\Lprod{\phi_{j}\driftscalar}{\phi_i}\}_{i,j=0}^{k}$. So the terms that appear inside this
    matrix are of the form $- k_1 \diffcoeff - k_2 \driftscalar h$, with $k_1, k_2 \in \mathbb{R}$ not depending on any parameter of the problem ($\tau$, $\diffcoeff$, $\driftscalar$ or
    $h$).
    \item $\mathbf{A}_{21} = \{\Lprod{\phi_j}{\nabla\phi_i} +
    \Lbprod{\phi_j \cdot \normal}{\phi_i}\}_{i,j=0}^{k}$. These elements do not depend on
    $\diffcoeff$, $\driftscalar$ or $h$.
    \item $\mathbf{A}_{22} = \{\Lbprod{\tau\phi_j}{\phi_i}\}_{i,j=0}^{k}$. In fact,
    $(\mathbf{A}_{22})_{00} = (\mathbf{A}_{22})_{kk} = \tau$ and all the other entries are zero.
    \item $\mathbf b_1 =\{-\Lbprod{\traceu[h]}{\diffcoeff \phi_i \cdot \normal}\}_{i=0}^k$.
    \item $\mathbf b_2 = \{\Lbprod{\tau\traceu[h]}{\phi_i} + \Lprod{f}{\phi_i}\}_{i=0}^k$.
  \end{itemize}

  Now we proceed with the following change of variables
  \begin{equation}\label{eq:hdgk-change-var}
    \mathbf v \Def \diffcoeff \mathbf u \eqcomma \qquad \qquad
    \tracev^h \Def \diffcoeff \traceu[h] \eqcomma \qquad \qquad
    \mathbf q \Def h\mathbf J \eqcomma
   \end{equation}
  and set $\widetilde{\mathbf X}_T= (\mathbf q,\mathbf v)^T$ so that
  \eqref{eq:local-blocks} becomes
  \begin{equation}\label{eq:rescaled-local-system}
    \begin{pmatrix}
        \mathbf{A}_{11}/h && \mathbf{A}_{12}/\alpha \\
        \mathbf{A}_{21} && h\mathbf{A}_{22}/\alpha \\
    \end{pmatrix}
    \widetilde{\mathbf X}_T
    = \begin{pmatrix}
        \mathbf b_1 \\ h\mathbf b_2\\
      \end{pmatrix}
  \end{equation}
  We denote the left hand side matrix above to be $\widetilde{\mathbf A}$ with block $\widetilde {\mathbf A}_{ij}$ for $i,j=1,2$. So the matrix
  $\widetilde{\mathbf{A}}_{11}$ loses its dependency on $h$.
  The elements of $\widetilde{\mathbf{A}}_{12}$, instead, become linear functions respect of
  $\peclet$. The matrix $\widetilde{\mathbf{A}}_{21} = \mathbf A_{21}$ while the matrix
  $\widetilde{\mathbf{A}}_{22}$ is a matrix function of $\delta$.

  Now we want to apply the static condensation by combining the rescaled system
  \eqref{eq:rescaled-local-system} and the third equation from \eqref{eq:problem2_hdg_scheme}, i.e.
  the transmission condition.
  Recalling that $\{\traceu_i\}_{i=0}^N$ are the unknowns on the trace, for each $i$, let $\cell_l$
  and $\cell_r$ be the left and right adjacent cells, i.e $l=i-1$ and $r=i$.  We also
  denote $\phi_{l,k}$ (or $\phi_{r,0}$) the only one
  shape basis function which is nonzero on the left (or right) boundary of the cell and denote
  $(J_r^{(T_l)}, u_r^{(T_l)})$ (or $(J_l^{(T_r)}, u_l^{(T_r)})$) the corresponding coefficients for $(\mathbf J,\mathbf u)$.
  When it is important to point out on which cell a coefficient is computed, we will
  indicate it with a superscript inside two parenthesis. Instead, we will avoid it when the coefficient is not
  cell-dependent, i.e. when the same computation can be performed on any cell obtaining the same results. Finally, we set $\mathbf{e}_l$
  (or $\mathbf{e}_r$) the canonical vector of $\mathbb{R}^{k+1}$ associated with $\phi_{l,0}$ (or
  $\phi_{r,k}$). Whence, the discrete transmission condition becomes
  \[
  (\tau u_r^{(\cell_l)} + J_r^{(\cell_l)}) + (\tau u_l^{(\cell_r)} - J_l^{(\cell_r)}) - 2 \tau \traceu_i = 0 \eqdot
  \]
  Using the change of variables in \eqref{eq:hdgk-change-var}, we obtain that
  \begin{equation} \label{eq:v-q-vhat}
    (\dtau v_r^{(\cell_l)} + q_r^{(\cell_l)}) + (\dtau v_l^{(\cell_r)} - q_l^{(\cell_r)})  - 2 \dtau \tracev_i = 0 \eqdot
  \end{equation}
  Letting
  \begin{equation} \label{eq:pl-defi}
    \mathbf{p}_r \Def (\mathbf{e}_r, \dtau \mathbf{e}_r)^T\eqcomma \qquad \textrm{and} \qquad \mathbf{p}_l \Def (-\mathbf{e}_l, \dtau \mathbf{e}_l)^T \eqcomma
  \end{equation}
  and based on the rescaled system \eqref{eq:rescaled-local-system}, we write
    \begin{equation}\label{eq:def-coeff-trace-system-right}
    \dtau v_r^{(\cell_l)} + q_r^{(\cell_l)} = \mathbf{p}_r^T \widetilde{\mathbf X}_T = \mathbf{p}_r^T
        \widetilde{\mathbf{A}}^{-1} \begin{pmatrix}
            \mathbf b_1^{(\cell_l)} \\ h\mathbf b_2^{(\cell_l)}\\
      \end{pmatrix}
  \end{equation}
  Similarly,
  \begin{equation}\label{eq:def-coeff-trace-system-left}
    \dtau v_l^{(\cell_r)} - q_l^{(\cell_r)} = \mathbf{p}_l^T \widetilde{\mathbf X}_T = \mathbf{p}_l^T
        \widetilde{\mathbf{A}}^{-1} \begin{pmatrix}
            \mathbf b_1^{(\cell_r)} \\ h\mathbf b_2^{(\cell_r)}\\
      \end{pmatrix} .
  \end{equation}
  Next we want to investigate the dependency of $\delta$ and $\peclet$ for the vector $\mathbf{b_{\cell}}$. Define the
  vector $\boldphi\in \mathbb R^{k+1}$ such that for $i=0,\ldots,k$,
  \[ \boldphi_i \Def \frac1h\int_{\cell} \! \phi_i \, \mathrm{d}x . \]
  According to the definition of $\mathbf b_T$ in \eqref{eq:local-blocks}, we can derive that
  \begin{align}
    \mathbf{b}_1^{(\cell_l)} & = \tracev_{i-1} \mathbf{e}_l - \tracev_{i} \mathbf{e}_r\eqcomma &
    \mathbf{b}_1^{(\cell_r)} & = \tracev_{i} \mathbf{e}_l - \tracev_{i+1} \mathbf{e}_r \eqcomma \label{eq:b-one-def}\\
    \mathbf{b}_2^{(\cell_l)} & =
        h f \boldphi + \frac{\dtau}{h} \tracev_{i - 1} \mathbf{e}_l + \frac{\dtau}{h} \tracev_{i}
        \mathbf{e}_r \eqcomma &
    \mathbf{b}_2^{(\cell_r)} & =  h f \boldphi + \frac{\dtau}{h} \tracev_{i} \mathbf{e}_l + \frac{\dtau}{h} \tracev_{i + 1}
        \mathbf{e}_r . \label{eq:b-two-def}
  \end{align}
  We combine to above two equations by setting (for both cells $\cell_l$ and $\cell_r$):
  \begin{equation}\label{eq:b-vectors-def}
   \mathbf{b}_l \Def \begin{pmatrix} \mathbf{e}_l \\ \dtau \mathbf{e}_l \end{pmatrix}
   \quad\text{ and } \quad
   \mathbf{b}_r \Def \begin{pmatrix} - \mathbf{e}_r \\ \dtau \mathbf{e}_r \end{pmatrix}
  \end{equation}
  so that
  \begin{align}
    \begin{pmatrix}
      \mathbf b_1^{(\cell_l)} \\ h\mathbf b_2^{(\cell_l)}\\
    \end{pmatrix} =
      h^2 f \begin{pmatrix} \mathbf{0} \\ \boldphi \end{pmatrix} +
      \tracev_{i - 1} \mathbf{b}_l + \tracev_{i} \mathbf{b}_r \label{eq:blr} \eqcomma \\[4pt]
    \begin{pmatrix}
      \mathbf b_1^{(\cell_r)} \\ h\mathbf b_2^{(\cell_r)}
    \end{pmatrix} =
      h^2 f \begin{pmatrix} \mathbf{0} \\ \boldphi \end{pmatrix} +
      \tracev_{i} \mathbf{b}_l + \tracev_{i + 1} \mathbf{b}_r \label{eq:blr-bis} \eqdot \\
  \end{align}
  Inserting \eqref{eq:blr} and \eqref{eq:blr-bis} into \eqref{eq:def-coeff-trace-system-left}
  and \eqref{eq:def-coeff-trace-system-right} to write
  \begin{equation}
    \dtau v_r^{(\cell_l)} + q_r^{(\cell_l)} = h^2 f \mathbf{p}_r^{T} \widetilde{\mathbf{A}}^{-1} \begin{pmatrix}
        \mathbf{0} \\ \boldphi\\
      \end{pmatrix}  + \tracev_{i - 1} \mathbf{p}_r^{T} \widetilde{\mathbf{A}}^{-1} \mathbf{b}_l +
      \tracev_{i}\mathbf{p}_r^{T} \widetilde{\mathbf{A}}^{-1} \mathbf{b}_r \eqdot
  \end{equation}
  and
  \begin{equation}
    \dtau v_l^{(\cell_r)} - q_l^{(\cell_r)} = h^2 f \mathbf{p}_l^{T} \widetilde{\mathbf{A}}^{-1} \begin{pmatrix}
        \mathbf{0} \\ \boldphi\\
      \end{pmatrix}  + \tracev_{i} \mathbf{p}_l^{T} \widetilde{\mathbf{A}}^{-1} \mathbf{b}_l +
      \tracev_{i + 1} \mathbf{p}_l^{T} \widetilde{\mathbf{A}}^{-1} \mathbf{b}_r
  \end{equation}

  Finally, we apply the above two equations in \eqref{eq:v-q-vhat} and combing coefficients with
  respect to $\{\hat{v}_i\}$. Define these coefficients by
  \begin{equation} \label{eq:r-def}
    r = r(\dtau, \peclet) \Def - \left(\mathbf{p}_l + \mathbf{p}_r \right)^{T}
              \widetilde{\mathbf{A}}^{-1} \begin{pmatrix}
        \mathbf{0} \\ \boldphi\\
      \end{pmatrix}
  \end{equation}
  \begin{equation}\label{eq:c123}
  \begin{aligned}
   c_1 &= c_1(\dtau, \peclet) \Def \mathbf{p}_r^T\widetilde{\mathbf{A}}^{-1} \mathbf{b}_l\eqcomma\\
   c_2 &= c_2(\dtau, \peclet) \Def \mathbf{p}_l^{T}\widetilde{\mathbf{A}}^{-1} \mathbf{b}_l +
     \mathbf{p}_r^{T} \widetilde{\mathbf{A}}^{-1} \mathbf{b}_r - 2 \delta \eqcomma\\
   c_3 &= c_3(\dtau, \peclet) \Def \mathbf{p}_l^T \widetilde{\mathbf{A}}^{-1} \mathbf{b}_r \eqcomma
   \end{aligned}
  \end{equation}
  and write
  \begin{equation}
    c_1\tracev_{i-1} + c_2 \tracev_{i} + c_3 \tracev_{i + 1} = h^2 f r \eqcomma
  \end{equation}
  The previous equation can be rewritten respect to $\traceu_i$:
  \begin{equation}\label{eq:condensed-system-explicit}
    \diffcoeff \left(c_1 \traceu_{i-1} + c_2 \traceu_{i} + c_3 \traceu_{i + 1}\right) = h^2 f r
    \eqdot
  \end{equation}
  Noting that the previous equation has been manipulated by multiplying the original
  equation with a factor $h$, we thus obtain the system matrix \eqref{eq:Lambda}
  whose entries $c_1$, $c_2$ and $c_3$ functions of $\delta$ and $\peclet$. The proof is complete.
\end{proof}

In the next lemma, we simplify the right hand side of the global system by showing that the constant $r$ in \eqref{eq:r-def}
does not depend on $\delta$, $\peclet$ and the polynomial degree $k$.

\begin{lemma}
  There holds that $r=-1$ in \eqref{eq:r-def} for all polynomial degree $k$.
\end{lemma}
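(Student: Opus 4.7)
The plan is to identify $\widetilde{\mathbf A}^{-1}(\mathbf 0,\boldphi)^T$ with the rescaled coefficient vector of an auxiliary local \HDG problem, and then to read off the value of $r$ from a discrete conservation identity obtained by testing the scheme with the constant function.

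The first step is the following identification. Setting $\tracev_{i-1}=\tracev_i=0$ and choosing the source $f=1/h^2$ (so that $h^2 f=1$) in \eqref{eq:b-one-def}--\eqref{eq:b-two-def} reduces the right-hand side of the rescaled local system \eqref{eq:rescaled-local-system} to exactly $(\mathbf 0,\boldphi)^T$. Hence $\widetilde{\mathbf A}^{-1}(\mathbf 0,\boldphi)^T$ is nothing but the rescaled coefficient vector $(\mathbf q,\mathbf v)^T$ of the local \HDG[k] solution on a cell $\cell$ with homogeneous boundary trace $\traceu[h]|_{\partial \cell}=0$ and source $f=1/h^2$. From \eqref{eq:pl-defi} and the change of variables \eqref{eq:hdgk-change-var}, the functional $(\mathbf p_l + \mathbf p_r)^T$ applied to this vector equals
\[
(q_r - q_l) + \dtau(v_l + v_r) \;=\; h\bigl[J^h(x_r)-J^h(x_l) + \tau\bigl(u^h(x_l)+u^h(x_r)\bigr)\bigr],
\]
where $x_l,x_r$ denote the endpoints of $\cell$.

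The second step is to test the second equation of \eqref{eq:problem2_hdg_scheme}, restricted to the single cell $\cell$, with the constant function $w\equiv 1 \in \polycell{k}{\cell}$, which is admissible for every $k\ge 0$. Since $\nabla w = 0$ the volume term drops, and inserting the numerical flux \eqref{eq:tau} with zero trace data yields the discrete cell conservation identity
\[
J^h(x_r) - J^h(x_l) + \tau\bigl(u^h(x_l)+u^h(x_r)\bigr) \;=\; (f,1)_{\cell} \;=\; hf \;=\; 1/h.
\]
Combining with the previous paragraph gives $(\mathbf p_l + \mathbf p_r)^T \widetilde{\mathbf A}^{-1}(\mathbf 0,\boldphi)^T = h\cdot(1/h) = 1$, so \eqref{eq:r-def} yields $r = -1$. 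Crucially, the argument uses only that constants belong to $\polycell{k}{\cell}$, so the identity holds for every $k\ge 0$.

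The main obstacle is not conceptual but purely notational: one must carefully track the signs introduced by the outward normals $n = \pm 1$ at the two endpoints and by the rescalings in \eqref{eq:hdgk-change-var}. Once these are verified the result follows from the fundamental local discrete conservation property built into the HDG formulation, namely that the net outgoing numerical flux across $\partial\cell$ equals the integrated source.
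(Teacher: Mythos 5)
Your proof is correct and follows essentially the same route as the paper: you identify $\widetilde{\mathbf A}^{-1}(\mathbf 0,\boldphi)^T$ with the rescaled local solution of the one-cell problem with zero trace data and $f=1/h^2$, and then extract the value of $(\mathbf p_l+\mathbf p_r)^T\widetilde{\mathbf A}^{-1}(\mathbf 0,\boldphi)^T$ from the discrete conservation identity obtained by testing the second equation of \eqref{eq:problem2_hdg_scheme} with the constant function. The sign bookkeeping in your flux expression matches the paper's computation \eqref{e:r-rep}, so no changes are needed.
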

\begin{proof}
Note that the right hand side of \eqref{eq:r-def} is an algebraic form that does not depend on the data $f$. Though $r$ is obtained from the discrete transmission condition \eqref{eq:problem2_hdg_scheme} between two adjacent cells (i.e. static condensation), we can actually reproduce such algebraic form in a simpler mesh setting.


Consider the model problem \eqref{eq:intro-problem} on  $\Omega = (0, h)$ associated with the homogeneous Dirichlet boundary condition. We shall approximate the solution using the HDG scheme \eqref{eq:problem2_hdg_scheme} with the mesh $\mathcal T$ that contains only one cell, namely $\mathcal T = \{\Omega\}$. Letting the data $f=1/h^2$, we can approximate $\mathbf J^h_u$ and $u^h$ by directly solving the local problem $\mathbf A_T\mathbf X_T = \mathbf b_T$ introduced by \eqref{eq:local-blocks}. Recalling that $\mathbf J$ and $u$ are the corresponding finite element coefficient vectors with dimension $k+1$, we follow from the change of variables in \eqref{eq:hdgk-change-var} as well as the rescaled local system \eqref{eq:rescaled-local-system} to get 
\begin{equation}
\begin{pmatrix}
      h\mathbf J \\ \mathbf \alpha \mathbf u\\
  \end{pmatrix} = 
  \begin{pmatrix}
      \mathbf q \\ \mathbf v\\
  \end{pmatrix}
   = \widetilde{\mathbf{A}}^{-1} \begin{pmatrix} \mathbf b_1 \\ h\mathbf b_2 \end{pmatrix}
  = \widetilde{\mathbf{A}}^{-1} \begin{pmatrix} \mathbf 0 \\ \boldphi \end{pmatrix} ,
\end{equation}
where for the last equality we used the fact that $\mathbf b_1 = 0$ due to the zero boundary condition and $\mathbf b_2 = hf\boldphi = \tfrac1h\boldphi$.
This leads to 
\begin{equation}\label{e:r-rep}
\begin{aligned}
    r &= - \left(\mathbf{p}_l + \mathbf{p}_r \right)^{T}
              \widetilde{\mathbf{A}}^{-1} \begin{pmatrix}
        \mathbf{0} \\ \boldphi\\
      \end{pmatrix} \\
      &= - \left[\begin{pmatrix}
        -\mathbf e_{0} \\ \delta \mathbf e_{0}\\
      \end{pmatrix}^T +\begin{pmatrix}
        \mathbf e_{k} \\ \delta \mathbf e_{k}\\
      \end{pmatrix}^T \right]
       \begin{pmatrix}
      h\mathbf J \\ \mathbf \alpha \mathbf u\\
  \end{pmatrix}
  = -h(\tau u_k + J_k + \tau u_0 - J_0) .
  \end{aligned}
\end{equation}
Here $\mathbf e_i$ denotes the canonical vector for the $(i+1)-$th component.

On the other hand, we choose the test function $v=1$ in the second equation of \eqref{eq:problem2_hdg_scheme} to get 
\[
 \Lbprod[\partial\Omega]{\curru[h] \cdot \normal + \tau\left(u^h - \traceu[h]\right)}{1} = \Lprod[\Omega]{f}{1} .
\]
Using coefficient vectors as well as $f=1/h^2$ to rewrite the above equation as
\[
    -J_0+\tau u_0 + J_k +\tau u_k = \frac1h .
\]
Combing \eqref{e:r-rep} with the above equation immediately implies that $r=-1$.

\end{proof}

Since $r$ is a constant, in order to show that there exists a value of $\tau$
for which the \HDG[k] method is \equivalent to the Scharfetter--Gummel scheme, we need to better
understand the structure of the coefficients $c_i$ defined in \Cref{c-coeffs} for $i=1,2,3$.

\begin{lemma}\label{thm:c-coeffs-relationships}
 Let $c_{1}$, $c_2$ and $c_{3}$ be as defined in \Cref{c-coeffs}. There hold
 \begin{enumerate}[(a)]
  \item $c_{1} + c_2 + c_{3} = 0$; \label{itm:c-coeff-sum}
  \item $c_{1}(\dtau, - \peclet) = c_{3} (\dtau, \peclet)$; \label{itm:c-coeff-even}
  \item $c_{3} - c_{1} = -\peclet$. \label{itm:difference}
 \end{enumerate}
\end{lemma}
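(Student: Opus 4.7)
My plan is to insert three specific exact solutions of the HDG scheme into the reduced three-point stencil of \Cref{c-coeffs} and read the identities off. Since the preceding lemma gives $r = -1$, dividing the global system $\Lambda \traceu = \mathbf{r}$ by the common factor $\diffcoeff/h$ turns the scalar equation at every interior node $x_i$ into
\[
  c_1 \traceu_{i-1} + c_2 \traceu_i + c_3 \traceu_{i+1} \;=\; -\frac{h^2 f}{\diffcoeff}.
\]

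For \eqref{itm:c-coeff-sum}, I would use the constant solution $u \equiv 1$, $\curru \equiv \driftscalar$, $\traceu \equiv 1$ associated with $f = 0$ and $u_0 \equiv 1$; since $\nabla u = 0$ and the coefficients are constants, a direct substitution into \eqref{eq:problem2_hdg_scheme} confirms this is an exact HDG solution (the $\tau(u-\traceu)$ jump vanishes on every face), and plugging $\traceu_i = 1$, $f = 0$ into the stencil yields $c_1 + c_2 + c_3 = 0$. For \eqref{itm:difference}, I would use the linear solution $u(x) = x$, $\curru(x) = \driftscalar x - \diffcoeff$, with trace values $\traceu_i = ih$; these fields belong to the HDG discrete spaces for every $k \ge 1$ and are easily verified to satisfy all three equations of \eqref{eq:problem2_hdg_scheme} with $f = \driftscalar$ and $u_0(x) = x$. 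Substituting $\traceu_i = ih$, $f = \driftscalar$ into the stencil and using \eqref{itm:c-coeff-sum} to annihilate the part affine in $i$ gives $h(c_3 - c_1) = -h^2 \driftscalar/\diffcoeff = -h\peclet$, whence $c_3 - c_1 = -\peclet$.

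Identity \eqref{itm:c-coeff-even} is the main obstacle, as it cannot be read off from a single continuous solution. The plan is a reflection argument: the pullback of \eqref{eq:intro-problem} by $\Phi : x \mapsto 1 - x$ leaves $\diffcoeff$, $h$ and $\tau$ (hence $\dtau$) unchanged but sends $\driftscalar \mapsto -\driftscalar$ (hence $\peclet \mapsto -\peclet$), while interchanging the left and right trace neighbors of every interior node. Because the HDG weak formulation is defined intrinsically on each cell through the outward normal, $\Phi$ carries HDG solutions for $\peclet$ to HDG solutions for $-\peclet$, so the reduced three-point stencil at a node for the reflected problem is the mirror image of the original. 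Equating them yields $c_1(\dtau,-\peclet) = c_3(\dtau,\peclet)$ (and, as a byproduct, shows $c_2$ is even in $\peclet$). To make this fully algebraic, one introduces the order-reversing involution $\phi_j \mapsto \phi_{k-j}$ on each cell together with the sign flip $\curru \mapsto -\curru$ and checks that conjugating $\widetilde{\mathbf{A}}(\dtau,\peclet)$ by the resulting signed permutation produces $\widetilde{\mathbf{A}}(\dtau,-\peclet)$, while simultaneously swapping $\mathbf{p}_l \leftrightarrow \mathbf{p}_r$ and $\mathbf{b}_l \leftrightarrow \mathbf{b}_r$ in the formulas \eqref{eq:c123}; the only delicate step is bookkeeping the signs that come from the reversal of the outward normal.
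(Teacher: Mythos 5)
Your proposal follows essentially the same route as the paper: part (a) via the constant exact solution, part (b) via the reflection $x \mapsto 1-x$ (which the paper also only sketches), and part (c) by inserting a linear exact solution with $f=\driftscalar$ into the condensed stencil. The only detail the paper adds is that for $k=0$ the linear solution is not in the discrete space, so (c) is checked there by direct computation of $c_1$ and $c_3$; you should note this case separately, since your argument as written covers only $k\ge 1$.
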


\begin{proof}
  To prove the point~\ref{itm:c-coeff-sum}, it is enough to check that, when $f = 0$ and the
boundary conditions impose that $\traceu_{0} = 1$ and $\traceu_{N} = 1$, the solution of the
system~\eqref{eq:problem2_hdg_scheme} are three constants $u^h = 1$, $\curru[h] = \driftscalar$ and
$\traceu = 1$. 
Imposing that the constant vector $(1, 1, \ldots, 1)$ is a solution of the
homogeneous system~\eqref{eq:condensed-system-explicit} gives the thesis.

For what concerns the point~\ref{itm:c-coeff-even}, this can be proven by using the change of
variable $\tilde u(x) \Def u(l -x)$ and the symmetry of the problem.

Finally, let us consider the point~\ref{itm:difference}. For \HDG[0] methods, the statement can be
proven simply computing explicitly the values of $c_{1}$ and $c_{3}$ as functions of $\delta$ and
$\peclet$.
Therefore, here we will take into account only methods of degree greater or equal of 1.
We consider a domain $[-h, h]$ made of two cells of the same size, so that $\traceu[h]$ is defined
on the points $-h$, $0$ and $h$. We choose
\begin{equation}
 f \Def \frac{\driftscalar}{h}
\end{equation}
as a constant and we impose $\traceu[h](-h) = -1$ and $\traceu[h](h) = 1$. A solution of the
problem~\eqref{eq:problem2_hdg_scheme} is therefore
\begin{equation}
 u^h = \frac{x}{h} \eqcomma \qquad \qquad \curru^h = \frac{\driftscalar x - \diffcoeff}{h} \eqcomma
 \qquad \qquad \traceu[h](0) = 0 \eqdot
\end{equation}
Applying equation~\ref{eq:condensed-system-explicit} (divided by $r$), we get
\begin{equation}
 c_1 \traceu[h](-h) + c_2 \traceu[h](0) + c_3 \traceu[h](h) = - \frac{h
 \driftscalar}{\diffcoeff}
\end{equation}
which is exactly
\begin{equation}
 c_3 - c_1 = -\peclet
\end{equation}
\end{proof}

\begin{lemma}\label{thm:dtau-degree}
The coefficients $c_{1}$, $c_2$ and $c_{3}$ are rational functions of $\dtau$ and $\peclet$,
i.e for each index $i \in \{1, 2, 3\}$, there exist two polynomials $p_i(\dtau, \peclet)$ and
$d_i(\dtau, \peclet)$ such that
\[ c_i = \frac{p_i}{d_i} \eqdot \]
Moreover, for $i \in \{1, 3\}$, the degree respect to the variable $\dtau$ of $p_i$ and $d_i$ is
smaller or equal than 1.
\end{lemma}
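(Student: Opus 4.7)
The plan has two parts: (i) show that each $c_i$ is rational in $(\dtau, \peclet)$, and (ii) bound the degree in $\dtau$ for $c_1$ and $c_3$. For part (i), I would invoke Cramer's rule. As observed in the proof of \Cref{c-coeffs}, the rescaled block matrix $\widetilde{\mathbf A}$ has entries that are polynomial in $(\dtau, \peclet)$: the blocks $\widetilde{\mathbf A}_{11}$ and $\widetilde{\mathbf A}_{21}$ are constant, $\widetilde{\mathbf A}_{12}$ is affine in $\peclet$, and $\widetilde{\mathbf A}_{22} = \dtau \mathbf D$ where $\mathbf D$ has nonzero entries only at the two corner Lagrange nodes. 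The vectors $\mathbf p_l, \mathbf p_r, \mathbf b_l, \mathbf b_r$ defined in \eqref{eq:pl-defi} and \eqref{eq:b-vectors-def} are likewise affine in $\dtau$. Since every entry of $\widetilde{\mathbf A}^{-1}$ is a ratio of minors of $\widetilde{\mathbf A}$, which are polynomials in $(\dtau, \peclet)$, the bilinear forms defining $c_i$ in \eqref{eq:c123} are rational in $(\dtau, \peclet)$, giving $c_i = p_i/d_i$.

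For part (ii), the key observation is that the $\dtau$-dependence of $\widetilde{\mathbf A}$ is a rank-at-most-$2$ update: $\widetilde{\mathbf A} = \mathbf M + \dtau \mathbf N$ with $\mathbf N = \mathbf u_l \mathbf u_l^T + \mathbf u_r \mathbf u_r^T$, where $\mathbf u_{\ast} = (\mathbf 0, \mathbf e_{\ast})^T$ and $\mathbf M$ is $\dtau$-independent. I would exploit the decompositions $\mathbf p_r = \mathbf p_r^{(0)} + \dtau \mathbf u_r$ and $\mathbf b_l = \mathbf b_l^{(0)} + \dtau \mathbf u_l$ together with the identities $\mathbf N \mathbf u_l = \mathbf u_l$ and $\mathbf N \mathbf u_r = \mathbf u_r$ to derive the peeling relations
\[
  \widetilde{\mathbf A}^{-1}\mathbf b_l = \mathbf u_l + \widetilde{\mathbf A}^{-1}(\mathbf b_l^{(0)} - \mathbf M \mathbf u_l),
  \qquad
  \mathbf p_r^T \widetilde{\mathbf A}^{-1} = \mathbf u_r^T + (\mathbf p_r^{(0)} - \mathbf M^T \mathbf u_r)^T \widetilde{\mathbf A}^{-1}.
\]
Combined with the orthogonalities $\mathbf p_r^T \mathbf u_l = 0$ and $\mathbf u_r^T(\mathbf b_l^{(0)} - \mathbf M \mathbf u_l) = 0$, which hold because $\mathbf e_l \cdot \mathbf e_r = 0$ for $k \ge 1$, these reduce the definition of $c_1$ in \eqref{eq:c123} to a bilinear form $\mathbf y_r^T \widetilde{\mathbf A}^{-1} \mathbf w_l$ with both $\mathbf y_r$ and $\mathbf w_l$ independent of $\dtau$ and supported only in the first block.

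With $c_1$ in this reduced form, I would apply the Sherman--Morrison--Woodbury identity to the rank-$\le 2$ update $\dtau \mathbf N$. The upshot is that $c_1$ is expressed as a $\dtau$-independent scalar plus a term of the shape $\dtau\, \vec p^{\,T}(\mathbf I_2 + \dtau \mathbf G)^{-1} \vec q$ with $\dtau$-independent $\vec p, \vec q, \mathbf G$, so the degree count in $\dtau$ reduces to the Cramer structure of a $2\times 2$ matrix inverse. The case of $c_3$ follows by exchanging left and right, or via the parity identity in \Cref{thm:c-coeffs-relationships}. The main obstacle will be the final degree count: the raw Woodbury expansion yields rational functions of degree up to $2$ in $\dtau$, and obtaining the sharp degree-$1$ bound requires identifying a cancellation specific to the top-block support of $\mathbf y_r$ and $\mathbf w_l$, which effectively reduces the rank of the $\dtau$-correction seen by the bilinear form from $2$ to $1$.
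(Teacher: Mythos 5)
Your part (i) is fine and matches the paper (both rest on Cramer's rule and the polynomial dependence of the entries of $\widetilde{\mathbf{A}}$ on $\dtau$ and $\peclet$), and your peeling identities reducing $c_1$ to a bilinear form $\mathbf y_r^T\widetilde{\mathbf{A}}^{-1}\mathbf w_l$ with $\dtau$-independent vectors supported in the first block are correct and rather elegant. The gap is in the final step: the Sherman--Morrison--Woodbury identity you invoke presupposes that the base matrix $\mathbf M = \widetilde{\mathbf{A}}\big|_{\dtau=0}$ is invertible, and it is not. The paper proves precisely this fact: at $\dtau=0$ the block $\widetilde{\mathbf{A}}_{22}$ vanishes identically and $\widetilde{\mathbf{A}}_{21}$ is singular (it represents a map landing in polynomials of degree $k-1$), so the last $k+1$ rows of $\mathbf M$ are linearly dependent. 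Consequently $\mathbf G=\mathbf U^T\mathbf M^{-1}\mathbf U$ and the expression $\dtau\,\vec p^{\,T}(\mathbf I_2+\dtau\mathbf G)^{-1}\vec q$ are undefined. This is not a removable technicality: the singularity of $\mathbf M$ is the \emph{mechanism} behind the degree bound you are after, so the unidentified ``cancellation reducing the rank of the $\dtau$-correction from $2$ to $1$'' that you leave open is exactly where the proof lives, and the framework you chose cannot see it. (A smaller issue: your orthogonality $\mathbf e_l\cdot\mathbf e_r=0$ fails for $k=0$, which the paper covers by a separate explicit computation.)

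The paper instead works with determinants directly: $\det\widetilde{\mathbf{A}}$ has $\dtau$-degree at most $2$ (only two entries carry $\dtau$) and vanishes at $\dtau=0$, hence equals $\dtau d$ with $\deg_\dtau d\le 1$; the Cramer numerators for $c_1$ are shown to vanish at $\dtau=0$ as well, because the same linear combination of the last $k+1$ rows that annihilates $\mathbf M$ also annihilates the substituted columns, and the common factor $\dtau$ cancels, leaving degree $\le 1$ over degree $\le 1$. If you want to keep your reduction, replace $\mathbf M^{-1}$ by the adjugate: $c_1 = \mathbf y_r^T\operatorname{adj}(\widetilde{\mathbf{A}})\mathbf w_l/\det\widetilde{\mathbf{A}}$, and since the row space of $\operatorname{adj}(\mathbf M)$ lies in the left kernel of $\mathbf M$, which is supported in the last $k+1$ coordinates while $\mathbf w_l$ is supported in the first block, the numerator also carries a factor $\dtau$. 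But at that point you have essentially reconstructed the paper's argument rather than produced an independent one.
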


\begin{proof}
 To prove this, we will use the Cramer's rule. Let us start by considering the determinant of the
 matrix $\widetilde{\mathbf{A}}$ defined in the proof of \Cref{c-coeffs}. It is clear
that the determinant is a polynomial in $\dtau$ and $\peclet$ and that the degree respect to
$\dtau$ must be less or equal than two (because $\dtau$ appears only on
the first and last element of $\widetilde{\mathbf{A}}_{22}$). Moreover, if $\dtau$ is 0 we have
that $\widetilde{\mathbf{A}}$ is singular. This is a well known result but it can also be easily
proven by noticing that the matrix $\widetilde{\mathbf{A}}_{21}$ is singular (because is a
projection of the space of polynomial of degree $k$ onto a space of degree $k - 1$) and
$\widetilde{\mathbf{A}}_{22}$ is identically 0 when $\dtau$ is 0. Therefore, there exists a linear
combination of the last $k + 1$ rows that is zero. Because of this, we have that there exists a
polynomial $d$ in $\peclet$ and $\dtau$ of degree less
or equal than 1 in $\dtau$ such that
\begin{equation}
 \mathrm{det}\left(\widetilde{\mathbf{A}}\right) = \dtau d \eqdot
\end{equation}

To compute
\begin{equation*}
  c_1 = \mathbf{p}_r^T\widetilde{\mathbf{A}}^{-1} \mathbf{b}_l \eqcomma
\end{equation*}
we need to evaluate the determinants of two matrices obtained by substituting the columns relative to
$v_r$ and $q_r$ with the vector $\mathbf{b}_l$ defined in \eqref{eq:b-vectors-def}. We denote
$\widetilde{\mathbf{A}}_{v_r}$ the matrix obtained by substituting into the matrix the
column relative to the unknown $v_r$ with the vector $\mathbf{b}_l$. In an analogous way, we define
also the matrix $\widetilde{\mathbf{A}}_{q_r}$. Recalling the definition of $\mathbf{p}_r$
given in~\eqref{eq:pl-defi}, we have that
\begin{equation}
 c_1 = \frac{%
     \dtau \mathrm{det}\big(\widetilde{\mathbf{A}}_{v_l}\big) +
           \mathrm{det}\big(\widetilde{\mathbf{A}}_{q_l}\big)
 }{%
   \mathrm{det}\big(\widetilde{\mathbf{A}} \big)%
 } \eqdot
\end{equation}

Now we compute $\mathrm{det}\big(\widetilde{\mathbf{A}}_{q_l}\big)$: in that case, there
are three entries of the matrix that depends on $\dtau$ (unless $k = 0$, but in this case a
trivial computation shows that $\mathrm{det}\big(\widetilde{\mathbf{A}}_{q_l}\big) = \delta(\peclet + 2)$);
two are in the block
$\widetilde{\mathbf{A}}_{22}$ and another one is the one introduced by the vector $\mathbf{b}_l$.
But two of this two entries are on the same row and therefore, by the Laplace expansion, we have
again that the degree must be smaller or equal than 2. Finally, if we impose $\dtau = 0$ we obtain
that the determinant is again 0. Indeed, we have previously shown that the zero vector can be
written as a linear combination of the last $k + 1$ rows of the matrix $\widetilde{\mathbf{A}}$
when $\dtau$ is zero. The same linear combination is also zero if applied on the last $k + 1$ rows
of $\widetilde{\mathbf{A}}_{q_l}$ because $\mathbf{b}_l$ contains only zeros in the last $k + 1$
entries (when $\dtau$ is 0). Therefore, as we did for the matrix $\widetilde{\mathbf{A}}$, we have
shown that there exist a polynomial $m_{q_l}$ in $\peclet$ and $\dtau$ such that
\begin{equation}
  \mathrm{det}\big(\widetilde{\mathbf{A}}_{q_l}\big) = \dtau m_{q_l} \eqdot
\end{equation}
and $\mathrm{deg}_{\dtau}(m_{q_l}) \leq 1$.

Finally, for what concerns $\widetilde{\mathbf{A}}_{v_l}$, we have again that there are only two
elements that contains $\dtau$ and, moreover, they are on the same row. This means that the degree
of the determinant of $\widetilde{\mathbf{A}}_{v_l}$ is a polynomial of degree smaller or equal
than 1 respect to $\dtau$. Taking into account that, when $\dtau$ is equal to 0, the last $k + 1$
rows of the matrix $\widetilde{\mathbf{A}}_{v_l}$ coincide with the last $k + 1$ rows of the matrix
$\widetilde{\mathbf{A}}$, we have that exist a polynomial $m_{v_l}$ that does not contains $\dtau$
(but only $\peclet$) such that
\begin{equation}
  \mathrm{det}\big(\widetilde{\mathbf{A}}_{v_l}\big) = \dtau m_{v_l} \eqdot
\end{equation}

Therefore, we can conclude that
\begin{equation}
 c_1 = \frac{\dtau m_{v_l} + m_{q_l}}{d} \eqcomma
\end{equation}
after having simplified by $\dtau$. This proves the thesis for $c_1$.

For $c_3$, the thesis holds because of point~\ref{itm:c-coeff-even} of
\Cref{thm:c-coeffs-relationships} and, finally, for $c_2$ we can use point~\ref{itm:c-coeff-sum} of
the same lemma.
\end{proof}

Now we have all the ingredients for exposing the proof of the main theorem of this section:

\begin{proof}[Proof of \Cref{thm:delta-k}]
 From \Cref{thm:dtau-degree} we know that $c_1$ can be written as
 \begin{equation}
  c_1 = \frac{l}{q} \eqcomma
 \end{equation}
 with $l$ and $q$ polynomials in $\peclet$ and $\dtau$ of degree respect to $\dtau$ smaller or
equal than 1.

Using point~\ref{itm:difference} of \Cref{thm:c-coeffs-relationships} we can affirm that
\begin{equation}
 c_3 = \frac{l - q\peclet}{q} \eqdot
\end{equation}

Given a polynomial $d(\peclet)$, we denote with $\swapped{d}\Def d(-\peclet)$. Using now
point~\ref{itm:c-coeff-even} \Cref{thm:c-coeffs-relationships}, we obtain that
\begin{equation} \frac{\swapped{l}}{\swapped{q}} = \frac{l - q\peclet}{q} \eqdot \end{equation}

Because of the fact that $q$ and $\swapped{q}$ have the same degree (respect to both $\peclet$ and
$\dtau$), we have that
\begin{equation} q = \swapped{q}\eqcomma \qquad \textrm{and} \qquad l = \swapped{l} + q\peclet \eqcomma \end{equation}
i.e., $q$ is an even polynomial (respect to $\peclet$) and its coefficients are the odd coefficients
of $l$ divided by $2 \peclet$.

Using \Cref{thm:dtau-degree}, we have just proved that there exist 4 \emph{even} polynomials in
$\mathbb{R}[\peclet]$ (that do not depend on $\dtau$) $q_0, q_1, s_0, s_1$ such that
\begin{equation}\label{eq:c1-coeff-structure}
  c_1 = \frac{(s_1 + q_1\peclet)\dtau + (s_0 + q_0\peclet)}{2(q_1\dtau + q_0)} \eqdot
\end{equation}

We define
\begin{equation}\label{eq:dtau-def}
 \dtau_k \Def - \frac{
     \mathrm{e}^\peclet(s_0 - q_0\peclet) - (s_0 + q_0\peclet)
   }{
     \mathrm{e}^\peclet(s_1 - q_1\peclet) - (s_1 + q_1\peclet)
   } \eqcomma
\end{equation}
and a trivial computation shows that replacing $\dtau$ with the value $\dtau_k$ inside
equation~\eqref{eq:c1-coeff-structure} we obtain
\begin{equation}
 c_1(\dtau_k, \peclet) = \frac{\peclet \mathrm{e}^\peclet}{\mathrm{e}^{\peclet} - 1} =
\B{-\peclet} \eqdot
\end{equation}

Therefore, beside the coefficient $-\frac{\alpha}{h}$, we have obtained the same coefficient of
equation~\eqref{eq:sg_system} for $\traceu_{i - 1}$. The same happens also for the other two terms
of the equation because of the points~\ref{itm:c-coeff-even} and \ref{itm:c-coeff-sum} of
\Cref{thm:c-coeffs-relationships}, and this concludes the proof.
\end{proof}

Using a CAS system, it is possible to compute explicitly the value of $\delta_k$ for a specific
degree $k$. Table~\ref{table:tau-val} shows these values up to degree 4. Instead, in
\autoref{fig:delta-k}, it is possible to see the plot of $\delta_k$ as a function of $\peclet$ for
$k \in \{0, 1, \ldots, 5\}$.

\begin{figure}[hbt!]
  \pgfplotstableread[col sep=comma]{data/delta_values.csv}\tauzdata
  \centering
  \tikzsetnextfilename{delta-vs-peclet}
  \begin{tikzpicture}
    \begin{axis}[xmode=log, ymode=log, axis x line=bottom, axis y line=left, xmin = 1e-2, xmax=1e3,
                 xlabel={$\peclet$}, legend pos=south east]
       \pgfplotsforeachungrouped \k/\c in {0/blue, 1/cyan, 2/green, 3/yellow, 4/orange, 5/red}{
          \edef\addanotherplot{
            \noexpand\addplot [\c] table [x={peclet}, y={delta\k}]{\noexpand\tauzdata};
            \noexpand\addlegendentry{\noexpand\(\noexpand\dtau_\k\noexpand\)}
          }
          \addanotherplot
       }
    \end{axis}
  \end{tikzpicture}
  \caption{The values of $\dtau_k$ for $k \in \{0, 1, \ldots, 5\}$.}
  \label{fig:delta-k}
\end{figure}
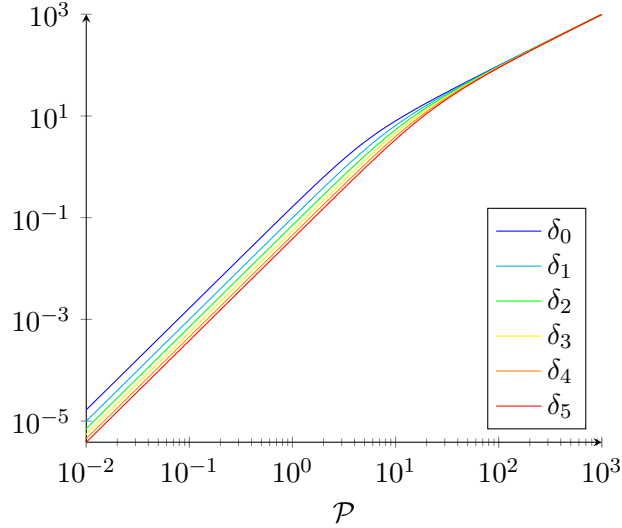

\afterpage{%
  \clearpage
  \thispagestyle{empty}
  \begin{landscape}
    \vspace*{\fill}
    \centering 
    \renewcommand{\arraystretch}{4}
    \begin{tabular}{L | L}
      \dtau_0 & - \frac{%
          \mathrm{e}^{\peclet}\left(-\peclet + 2 \right) - \left(\peclet + 2 \right)
        }{%
          \mathrm{e}^{\peclet} - 1
        }\\
      \dtau_1 & - \frac{%
          \mathrm{e}^{\peclet}\left(%
            \peclet^{2} - 6 \, \peclet + 12
          \right) - \left(%
            \peclet^{2} + 6 \,\peclet + 12
          \right)%
        }{%
          \mathrm{e}^{\peclet}\left(-\peclet + 2 \right) - \left(\peclet + 2 \right)
        }\\
      \dtau_2 & - \frac{%
          \mathrm{e}^{\peclet}\left(%
            -\peclet^{3} + 12 \, \peclet^{2} - 60 \, \peclet + 120
          \right) - \left(%
            \peclet^{3} + 12 \, \peclet^{2} + 60 \, \peclet + 120
          \right)%
        }{%
          \mathrm{e}^{\peclet}\left(%
            \peclet^{2} - 6 \, \peclet + 12
          \right) - \left(%
            \peclet^{2} + 6 \,\peclet + 12
          \right)%
        }\\
      \dtau_3 & - \frac{%
          \mathrm{e}^{\peclet}\left(%
            \peclet^{4} - 20 \, \peclet^{3} + 180 \, \peclet^{2} - 840 \, \peclet + 1680
          \right) - \left(%
            \peclet^{4} + 20 \, \peclet^{3} + 180 \, \peclet^{2} + 840 \, \peclet + 1680
          \right)%
        }{%
          \mathrm{e}^{\peclet}\left(%
            -\peclet^{3} + 12 \, \peclet^{2} - 60 \, \peclet + 120
          \right) - \left(%
            \peclet^{3} + 12 \, \peclet^{2} + 60 \, \peclet + 120
          \right)%
        }\\
      \dtau_4 & - \frac{%
          \mathrm{e}^{\peclet}\left(%
            -\peclet^{5} + 30 \, \peclet^{4} - 420 \, \peclet^{3} + 3360 \, \peclet^{2} - 15120 \,
                \peclet + 30240
          \right) - \left(%
            \peclet^{5} + 30 \, \peclet^{4} + 420 \, \peclet^{3} + 3360 \, \peclet^{2} + 15120 \,
                \peclet + 30240
          \right)%
        }{%
          \mathrm{e}^{\peclet}\left(%
            \peclet^{4} - 20 \, \peclet^{3} + 180 \, \peclet^{2} - 840 \, \peclet + 1680
          \right) - \left(%
            \peclet^{4} + 20 \, \peclet^{3} + 180 \, \peclet^{2} + 840 \, \peclet + 1680
          \right)%
        }
    \end{tabular}
    \captionsetup{type=table}
    \captionof{table}{
        Values of $\dtau_k$ for $k \in \{0, \ldots, 4\}$. %
        $\tau_k$ is defined as $\frac{\diffcoeff}{h}\dtau_k$%
    }
    \label{table:tau-val}
    \vspace*{\fill}
    \end{landscape}
    \clearpage
}

Before concluding this section, it is worth noticing that the values shown in
table~\ref{table:tau-val} are unique, i.e. there exists only one possible choice of $\tau$ that
minimize the error on the trace. This is a consequence of the following lemma.

\begin{proposition}\label{thm:unique-tau}
For a given degree $k$, there exists a \emph{unique} value $\tau_k$ so that the solution
$\traceu[h]$ on the trace of an \HDG[k] method defined in \eqref{eq:problem2_hdg_scheme} for $f =
0$ and every choice of the parameters $\diffcoeff$ and $\driftscalar$ and of the Dirichlet boundary
conditions $\traceu(0)$ and $\traceu(1)$, coincides with $\traceu$ on every point of the trace.
\end{proposition}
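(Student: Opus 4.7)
The plan is to translate the condition that $\traceu[h]$ coincide with the exact solution on the trace into a single scalar equation for $\dtau$, and then to exploit the Möbius structure of $c_1$ established in \Cref{thm:dtau-degree} to show that this equation has at most one root.

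First I would write down the exact solution of \eqref{eq:intro-problem} explicitly. With $f=0$ and constant coefficients, integrating $\curru' = 0$ together with the constitutive relation $\curru + \diffcoeff u' - u \driftcoeff = 0$ gives $u(x) = A_1 + A_2 \mathrm{e}^{\driftscalar x/\diffcoeff}$, so at the trace nodes $x_i = ih$ one has $u(x_i) = A_1 + A_2(\mathrm{e}^{\peclet})^i$, and as $(\traceu(0),\traceu(1))$ ranges over $\mathbb{R}^2$ so does $(A_1,A_2)$. Imposing $\traceu[h](x_i) = u(x_i)$ for every such pair, and substituting into the homogeneous recurrence
\[ c_1 \traceu_{i-1} + c_2 \traceu_i + c_3 \traceu_{i+1} = 0 \]
from \Cref{c-coeffs} (the right-hand side $hfr$ vanishes because $f=0$), one obtains by matching the coefficients of $A_1$ and $A_2$ the two conditions $c_1 + c_2 + c_3 = 0$ and $c_1\mathrm{e}^{-\peclet} + c_2 + c_3 \mathrm{e}^{\peclet} = 0$. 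The first is automatic by \Cref{thm:c-coeffs-relationships}\,\ref{itm:c-coeff-sum}; substituting $c_2 = -c_1 - c_3$ into the second yields $c_1 = c_3 \mathrm{e}^{\peclet}$, which combined with $c_3 - c_1 = -\peclet$ from \ref{itm:difference} of the same lemma produces the unique pair $c_1 = \B{-\peclet}$, $c_3 = \B{\peclet}$. Hence the HDG trace matches the exact trace if and only if $c_1(\dtau,\peclet) = \B{-\peclet}$.

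The remaining step, which I expect to be the main obstacle, is showing that this scalar equation has at most one solution in $\dtau$. For this I would invoke the representation~\eqref{eq:c1-coeff-structure} from \Cref{thm:dtau-degree}: for fixed $\peclet$, $c_1$ is a Möbius transformation in $\dtau$, hence assumes each value at most once unless it is constant in $\dtau$. The degenerate constant case corresponds to the proportionality $s_0 q_1 = s_1 q_0$ of the numerator and denominator coefficients in~\eqref{eq:c1-coeff-structure}; it can be excluded either by inspecting the explicit expressions in Table~\ref{table:tau-val}, which exhibit $\dtau_k$ as a nontrivial function of $\peclet$, or by evaluating $c_1$ at a convenient value of $\dtau$ and observing disagreement with $\B{-\peclet}$ for generic $\peclet$. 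Combined with the existence guaranteed by \Cref{thm:delta-k}, this yields the desired uniqueness.
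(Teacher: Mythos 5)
Your proposal is correct and follows the same core strategy as the paper's proof: reduce the coincidence requirement to a single scalar equation in $\dtau$, and then use the fact (from \Cref{thm:dtau-degree}) that the relevant coefficient is a ratio of polynomials of degree at most one in $\dtau$ to conclude that there is at most one root. The two arguments differ only in how that scalar equation is produced. The paper restricts the condensed system to two cells, sets the right boundary datum to zero, and works with the ratio $c_2/c_1$; you instead substitute the explicit exponential solution $A_1 + A_2\mathrm{e}^{\driftscalar x/\diffcoeff}$ into the three-term recurrence and match the coefficients of $A_1$ and $A_2$, which pins down $c_1 = \B{-\peclet}$ and $c_3 = \B{\peclet}$ directly. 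This is arguably the cleaner route, since it recovers the Scharfetter--Gummel coefficients explicitly and makes the connection to \Cref{thm:delta-k} transparent. You are also more careful than the paper on one point: a M\"obius map in $\dtau$ determines its argument uniquely only if it is non-constant, and the paper silently assumes this non-degeneracy, whereas you flag it and propose ways to exclude it. Note that inspecting Table~\ref{table:tau-val} only settles $k\le 4$, so for arbitrary $k$ the exclusion still requires an argument (e.g.\ that $c_1$ cannot be independent of $\dtau$ because the local solver degenerates as $\dtau\to 0$); this residual gap, however, is shared with --- and unacknowledged by --- the paper's own proof.
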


\begin{proof}
The existence has already been proven in the \Cref{thm:delta-k}. For the uniqueness, let us take a
point $x_i$ on the trace which is on the boundary between the cell $\cell_i$ and the cell $\cell_{i
+ 1}$. Because $\traceu[h]$ on the trace coincides with the analytical solution in every point of
the trace, we can restrict our method only on the two cells $\cell_i$ and $\cell_{i + 1}$ using the
values of the trace on $x_{i - 1}$ and $x_{i + 1}$ as Dirichlet boundary conditions. We denote with
$k_{i-1}$ and $k_{i + 1}$ a possible choice of the boundary conditions on the point $x_{i-1}$ and
$x_{i + 1}$ and with $k(\diffcoeff, \driftscalar, k_{i - 1}, k_{i + 1})$ the value of the exact
solution on the point $x_i$.

Then we have that for every choice of $k_{i-i}$ and $k_{i + 1}$.
\begin{equation} c_1 k_{i-1} + c_2 k + c_3 k_{i + 1} = 0 \eqcomma \end{equation}
where the coefficients $c_1$, $c_2$ and $c_3$ have been introduced in
\Cref{thm:c-coeffs-relationships}.

In particular, we can choose $k_{i + 1} = 0$ obtaining that
\begin{equation}\label{eq:unique-tau} \frac{c_2}{c_1} = - \frac{k_{i - 1}}{k} \eqdot \end{equation}
Only the left hand side of the previous equation depends on $\tau$; moreover, because of what we
have exposed in the proof of \Cref{thm:delta-k}, there exists 4 coefficients $s_1$,
$s_2$, $s_3$ and $s_4$ such that
\begin{equation} \frac{c_2}{c_1} = \frac{s_1 \tau + s_2}{s_3 \tau + s_4} \end{equation}
Therefore, respect to $\tau$, equation~\eqref{eq:unique-tau} admits only one solution and this ends
the proof.
\end{proof}

\section{Numerical examples} \label{sec:num-examples}
In this section we illustrate some experiments we performed related to the error of the \HDG
method applied on the equation
\[ \frac{\partial}{\partial x} \left(\beta u - \frac{\partial}{\partial x}u \right) = 0 \]
in the domain $\Omega \Def [0, 1]$ with Dirichlet boundary conditions
$\traceu(0) = 0$ and $\traceu(1) = 1$.
This problem can be seen as a 1D formulation of \Cref{pb:flux-mixed} when $\diffcoeff = 1$ and $f=0$.

In \Cref{fig:uniform-multiple-refinements}, we applied the \HDG[0] method for different values of \driftscalar and for a different mesh size $h$ of the uniform triangulation $\triangulation_h$.
We define the error functions
\[
  e_u \Def \left|u - u^h\right| \qquad \qquad
  e_{\traceu}\Def \left| \traceu -\traceu[h] \right|
\]

Because of \Cref{thm:unique-tau}, we know that there exists one and only one value for which $e_{\traceu}$ is identically zero. This can be seen also from a numerical point of view where we
identify one specific lower peak in the $\Lspace[\infty]{}$ error of the trace.
It is interesting to note that the peak correspond to a value of $\tau$ that decrease for smaller values of \driftscalar or $h$ (and, therefore, for smaller values of $\peclet$), in accordance with the values of \Cref{table:tau-val}.

\begin{figure}[p]
  \pgfplotstableread[col sep=comma]{data/uniform_plot_data_deg_0.csv}\uniformdatadegzero
  \centering
  \tikzsetnextfilename{error-vs-refinements-deg0-uniform}
  \begin{tikzpicture}
    \begin{groupplot}[group style={
                      group name=uniformerrorsdeg0,
                      group size= 3 by 3}, height=3.9cm, width=.30\linewidth,
                      xmode=log, ymode=log, xticklabel={\empty}, yticklabel={\empty},
                      ymin=0.0000000000001, ymax=100, xtick pos=left, ytick pos=left,
                      ytick={0.0000000000001, 0.0000000001, 0.0000001, 0.0001, 0.1, 100}]
        \nextgroupplot[title={$\driftscalar = 1$}, ylabel={$h = \nicefrac{1}{64}$},
		       yticklabel={\axisdefaultticklabellog}]
                \addplot [blue, thick] table [x={tau},
			  y={d0r6b1l2}]{\uniformdatadegzero};\label{plots:l2-unirefs}
		\addplot [red, thick, dashed] table [x={tau},
			  y={d0r6b1linftyhat}]{\uniformdatadegzero};\label{plots:linftyhat-unirefs}
		\coordinate (bot) at (rel axis cs:1,0);
        \nextgroupplot[title={$\driftscalar = 10$}]
                \addplot [blue, thick] table [x={tau},
			  y={d0r6b10l2}]{\uniformdatadegzero};
		\addplot [red, thick, dashed] table [x={tau},
			  y={d0r6b10linftyhat}]{\uniformdatadegzero};
        \nextgroupplot[title={$\driftscalar = 100$}]
                \addplot [blue, thick] table [x={tau},
			  y={d0r6b100l2}]{\uniformdatadegzero};
		\addplot [red, thick, dashed] table [x={tau},
			  y={d0r6b100linftyhat}]{\uniformdatadegzero};
        \nextgroupplot[ylabel={$h = \nicefrac{1}{256}$}, yticklabel={\axisdefaultticklabellog}]
                \addplot [blue, thick] table [x={tau},
			  y={d0r8b1l2}]{\uniformdatadegzero};
		\addplot [red, thick, dashed] table [x={tau},
			  y={d0r8b1linftyhat}]{\uniformdatadegzero};
        \nextgroupplot
                \addplot [blue, thick] table [x={tau},
			  y={d0r8b10l2}]{\uniformdatadegzero};
		\addplot [red, thick, dashed] table [x={tau},
			  y={d0r8b10linftyhat}]{\uniformdatadegzero};
        \nextgroupplot
                \addplot [blue, thick] table [x={tau},
			  y={d0r8b100l2}]{\uniformdatadegzero};
		\addplot [red, thick, dashed] table [x={tau},
			  y={d0r8b100linftyhat}]{\uniformdatadegzero};
        \nextgroupplot[ylabel={$h = \nicefrac{1}{1024}$}, xticklabel={\axisdefaultticklabellog},
                       yticklabel={\axisdefaultticklabellog}, xlabel={$\tau$}]
                \addplot [blue, thick] table [x={tau},
			  y={d0r10b1l2}]{\uniformdatadegzero};
		\addplot [red, thick, dashed] table [x={tau},
			  y={d0r10b1linftyhat}]{\uniformdatadegzero};
        \nextgroupplot[xticklabel={\axisdefaultticklabellog}, xlabel={$\tau$}]
                \addplot [blue, thick] table [x={tau},
			  y={d0r10b10l2}]{\uniformdatadegzero};
		\addplot [red, thick, dashed] table [x={tau},
			  y={d0r10b10linftyhat}]{\uniformdatadegzero};
        \nextgroupplot[xticklabel={\axisdefaultticklabellog}, xlabel={$\tau$}]
                \addplot [blue, thick] table [x={tau},
			  y={d0r10b100l2}]{\uniformdatadegzero};
		\addplot [red, thick, dashed] table [x={tau},
			  y={d0r10b100linftyhat}]{\uniformdatadegzero};
    \end{groupplot}

\path (uniformerrorsdeg0 c1r3.south west|-current bounding box.south)--
      coordinate(legendpos)
      (uniformerrorsdeg0 c3r3.south east|-current bounding box.south);
\matrix[
    matrix of nodes,
    anchor=north,
    draw,
    inner sep=0.2em,
    draw
  ]at([yshift=-0.2ex]legendpos)
  {
    \ref*{plots:l2-unirefs}& $\left\|e_u\right\|_{\Lspace{\triangulation}}$&[5pt]
    \ref*{plots:linftyhat-unirefs}& $\left\|e_{\traceu}\right\|_{\Lspace[\infty]{\faces}}$\\};
  \end{tikzpicture}
  \captionsetup{type=figure}
  \captionof{figure}{
      Errors obtained solving $\diver*{-u + \driftcoeff \nabla u} = 0$ on domain
      $\Omega = \left(0, 1\right)$ with $\traceu(0)=0$ and $\traceu(1)=1$ with \HDG[0].
  }
  \label{fig:uniform-multiple-refinements}
\end{figure}
\begin{figure}[p]
  \pgfplotstableread[col sep=comma]{data/uniform_plot_data.csv}\uniformdata
  \centering
  \tikzsetnextfilename{error-vs-degrees-uniform}
  \begin{tikzpicture}
    \begin{axis}[
        height=6.5cm,
        width=.85\linewidth,
        xmode=log,
        ymode=log,
        ymin=1e-12,
        ymax=10,
        xtick pos=left,
        ytick pos=left,
        ytick={1e-10, 1e-8, 1e-6, 1e-4, 1e-2, 1},
        legend pos=south east
      ]
      \addplot [red, thick] table [x={tau}, y={d0r8b100linftyhat}]{\uniformdata};
      \addlegendentry{deg. 0}
      \addplot [blue, thick, dashed] table [x={tau}, y={d1r8b100linftyhat}]{\uniformdata};
      \addlegendentry{deg. 1}
      \addplot [olive, thick, dash dot] table [x={tau}, y={d2r8b100linftyhat}]{\uniformdata};
      \addlegendentry{deg. 2}
    \end{axis}
  \end{tikzpicture}
  \captionsetup{type=figure}
  \captionof{figure}{
      The $\left\|e_{\traceu}\right\|_{\Lspace[\infty]{\faces}}$ error obtained
      solving $\diver*{-u + \driftcoeff \nabla u} = 0$ on domain
      $\Omega = \left(0, 1\right)$, using elements of different degrees,
      with $\traceu(0)=0$ and $\traceu(1)=1$ for $h = \frac{1}{256}$ and
      $\driftscalar = 100$.
  }
  \label{fig:uniform-multiple-degree}
\end{figure}

For what concerns the error $e_u$, the plots in \autoref{fig:uniform-multiple-refinements} seems to
suggest to take the smaller possible value for $\tau$, but this conflicts with the fact that
the condensed linear system becomes less conditioned as soon as $\tau$ becomes close to zero (where
we get a singular matrix). In this prospective, $\tau_0$ is a value where the error $\left\|
e_u \right\|_{\Lspace{\triangulation}}$ is still reasonably small but the system is well
conditioned.

In \Cref{fig:uniform-multiple-degree}, instead, we shoe the behavior of $e_{\traceu}$ for the $\HDG[k]$
scheme using several different values of $k$. There we see that all three plots have a
similar behavior with a single minimum (corresponding to the value of $\tau_k$). As we have already seen
in \Cref{fig:delta-k}, the position of the minimum decrease increasing the degrees.

\begin{figure}[hpt]
  \pgfplotstableread[col sep=comma]{data/convergence_deg0_beta100.csv}\convdegzero
  \centering
  \makeatletter
  \tikzsetnextfilename{convergence-deg0-uniform}
  \begin{tikzpicture}
    \begin{groupplot}[group style={
                      group name=convplotdeg0,
                      group size= 2 by 2, vertical sep = 80pt, horizontal sep = .15\textwidth},
                      width=.45\textwidth, xmode=log, ymode=log, xmin=1e-4]
      \nextgroupplot[title={$\left\|e_u \right\|_{\Lspace{\Omega}}$}, xlabel={$h$}]
        \@for\tauval:={%
              0.01,0.018,0.032,0.056,0.1,0.18,0.32,0.56,1,1.8,3.2,5.6,10,18,32,%
              56,1e+02,1.8e+02,3.2e+02,5.6e+02,1e+03}%
          \do {%
	      \addplot [gray, opacity=0.8] table %
	          [x expr=1 / (2 ^ \thisrow{refinements}), y={tau\tauval_l2}]%
	          {\convdegzero};
	}
        \addplot [red, style={very thick}] table %
	          [x expr=1 / (2 ^ \thisrow{refinements}), y={tau0_l2}]%
	          {\convdegzero};
      \nextgroupplot[title={$\left\|e_u \right\|_{\Lspace[\infty]{\Omega}}$}, xlabel={$h$}]
        \@for\tauval:={%
              0.01,0.018,0.032,0.056,0.1,0.18,0.32,0.56,1,1.8,3.2,5.6,10,18,32,%
              56,1e+02,1.8e+02,3.2e+02,5.6e+02,1e+03}%
          \do {%
	      \addplot [gray, opacity=0.8] table %
	          [x expr=1 / (2 ^ \thisrow{refinements}), y={tau\tauval_linfty}]%
	          {\convdegzero};
	}
        \addplot [red, style={very thick}] table %
	          [x expr=1 / (2 ^ \thisrow{refinements}), y={tau0_linfty}]%
	          {\convdegzero};
    \end{groupplot}
  \end{tikzpicture}
\makeatother
\captionsetup{type=figure}
  \captionof{figure}{
    The error of the \HDG[0] method as a function of the cell size $h$ for a fixed value of
    the parameters $\alpha$ and $\beta$ ($\alpha= 1$, $\beta = 100$) and for $f = 0$. The red
    line is the result using $\tau = \tau_0$.
  }
\label{fig:convergence-deg0}
\end{figure}

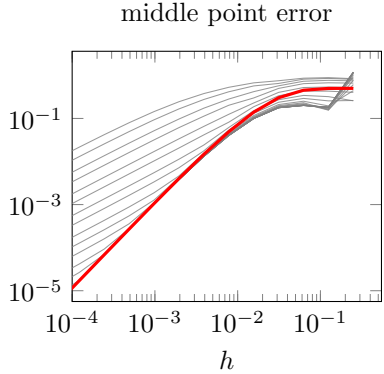
\begin{figure}[hpb]
  \pgfplotstableread[col sep=comma]{data/convergence_deg0_beta100.csv}\convdegzero
  \centering
  \makeatletter
  \tikzsetnextfilename{convergence-deg0-uniform-middlepoint}
  \begin{tikzpicture}
    \begin{axis}[%
        width=.45\textwidth,
        xmode=log,
        ymode=log,
        xmin=1e-4,
        title={middle point error},
        xlabel={$h$}
      ]
        \@for\tauval:={%
              0.01,0.018,0.032,0.056,0.1,0.18,0.32,0.56,1,1.8,3.2,5.6,10,18,32,%
              56,1e+02,1.8e+02,3.2e+02,5.6e+02,1e+03}%
          \do {%
	      \addplot [gray, opacity=0.8] table %
	          [x expr=1 / (2 ^ \thisrow{refinements}), y={tau\tauval_middle}]%
	          {\convdegzero};
	}
        \addplot [red, style={very thick}] table %
	          [x expr=1 / (2 ^ \thisrow{refinements}), y={tau0_middle}]%
	          {\convdegzero};
    \end{axis}
  \end{tikzpicture}
  \makeatother\captionsetup{type=figure}
  \captionof{figure}{
    The error of the \HDG[0] method for the same problem described in \Cref{fig:convergence-deg0}.
    In this case, we show the error on the middle point of every cell. The red
    line is the result using $\tau = \tau_0$.
  }
  \label{fig:convergence-deg0-middlepoint}
\end{figure}

\begin{figure}[hp]
  \pgfplotstableread[col sep=comma]{data/convergence_deg1_beta100.csv}\convdegone
  \centering
  \makeatletter
  \tikzsetnextfilename{convergence-deg1-uniform}
  \begin{tikzpicture}
    \begin{groupplot}[group style={
                      group name=convplotdeg0,
                      group size= 2 by 1, vertical sep = 80pt, horizontal sep = .15\textwidth},
                      width=.45\textwidth, xmode=log, ymode=log, xmin=1e-3]
      \nextgroupplot[title={$\left\|e_u \right\|_{\Lspace{\Omega}}$}, xlabel={$h$}]
        \@for\tauval:={%
              0.01,0.018,0.032,0.056,0.1,0.18,0.32,0.56,1,1.8,3.2,5.6,10,18,32,%
              56,1e+02,1.8e+02,3.2e+02,5.6e+02,1e+03}%
          \do {%
	      \addplot [gray, opacity=0.8] table %
	          [x expr=1 / (2 ^ \thisrow{refinements}), y={tau\tauval_l2}]%
	          {\convdegone};
	}
        \addplot [red, style={very thick}] table %
	          [x expr=1 / (2 ^ \thisrow{refinements}), y={tau_magic_l2}]%
	          {\convdegone};
      \nextgroupplot[title={$\left\|e_u \right\|_{\Lspace[\infty]{\Omega}}$}, xlabel={$h$}]
        \@for\tauval:={%
              0.01,0.018,0.032,0.056,0.1,0.18,0.32,0.56,1,1.8,3.2,5.6,10,18,32,%
              56,1e+02,1.8e+02,3.2e+02,5.6e+02,1e+03}%
          \do {%
	      \addplot [gray, opacity=0.8] table %
	          [x expr=1 / (2 ^ \thisrow{refinements}), y={tau\tauval_linfty}]%
	          {\convdegone};
	}
        \addplot [red, style={very thick}] table %
	          [x expr=1 / (2 ^ \thisrow{refinements}), y={tau_magic_linfty}]%
	          {\convdegone};
    \end{groupplot}
  \end{tikzpicture}
\makeatother
\captionsetup{type=figure}
  \captionof{figure}{
    The error of the \HDG[1] method as a function of the cell size $h$ for a fixed value of
    the parameters $\alpha$ and $\beta$ ($\alpha= 1$, $\beta = 100$) and for $f = 0$. The red
    line is the result using $\tau = \tau_1 = \frac{\diffcoeff}{h}\dtau_1$.
  }
\label{fig:convergence-deg1}
\end{figure}

\begin{figure}[hp]
  \pgfplotstableread[col sep=comma]{data/convergence_deg2_beta100.csv}\convdegtwo
  \centering
  \makeatletter
  \tikzsetnextfilename{convergence-deg2-uniform}
  \begin{tikzpicture}
    \begin{groupplot}[group style={
                      group name=convplotdeg0,
                      group size= 2 by 1, vertical sep = 80pt, horizontal sep = .15\textwidth},
                      width=.45\textwidth, xmode=log, ymode=log, xmin=1e-2]
      \nextgroupplot[title={$\left\|e_u \right\|_{\Lspace{\Omega}}$}, xlabel={$h$}]
        \@for\tauval:={%
              0.01,0.018,0.032,0.056,0.1,0.18,0.32,0.56,1,1.8,3.2,5.6,10,18,32,%
              56,1e+02,1.8e+02,3.2e+02,5.6e+02,1e+03}%
          \do {%
	      \addplot [gray, opacity=0.8] table %
	          [x expr=1 / (2 ^ \thisrow{refinements}), y={tau\tauval_l2}]%
	          {\convdegtwo};
	}
        \addplot [red, style={very thick}] table %
	          [x expr=1 / (2 ^ \thisrow{refinements}), y={tau_magic_l2}]%
	          {\convdegtwo};
      \nextgroupplot[title={$\left\|e_u \right\|_{\Lspace[\infty]{\Omega}}$}, xlabel={$h$}]
        \@for\tauval:={%
              0.01,0.018,0.032,0.056,0.1,0.18,0.32,0.56,1,1.8,3.2,5.6,10,18,32,%
              56,1e+02,1.8e+02,3.2e+02,5.6e+02,1e+03}%
          \do {%
	      \addplot [gray, opacity=0.8] table %
	          [x expr=1 / (2 ^ \thisrow{refinements}), y={tau\tauval_linfty}]%
	          {\convdegtwo};
	}
        \addplot [red, style={very thick}] table %
	          [x expr=1 / (2 ^ \thisrow{refinements}), y={tau_magic_linfty}]%
	          {\convdegtwo};
    \end{groupplot}
  \end{tikzpicture}
\makeatother
\captionsetup{type=figure}
  \captionof{figure}{
    The error of the \HDG[2] method as a function of the cell size $h$ for a fixed value of
    the parameters $\alpha$ and $\beta$ ($\alpha= 1$, $\beta = 100$) and for $f = 0$. The red
    line is the result using $\tau = \tau_2 = \frac{\diffcoeff}{h}\dtau_2$.
  }
\label{fig:convergence-deg2}
\end{figure}

In \Cref{fig:convergence-deg0},  \Cref{fig:convergence-deg1}, and \Cref{fig:convergence-deg2},
we compare the convergence of the $\HDG[k]$ method with $\tau=\tau_k$ against a fixed choice of $\tau$.
Indeed, we report as gray lines the convergence plots for several different fixed choice of $\tau$,
going from $10^{-2}$ up to $10^3$. In red, instead, we have the error of the \HDG method that
uses our proposed choice of $\tau$. The plots show that we have the same order of convergence in both the
$\Lspace{}$ space for the solution on the trace and in the $\Lspace[\infty]{}$ space for the trace.

Let us define for every cell $\cell \in \triangulation$ the point $x_\cell$ as the center of the
cell. We define the middle point error of $u^h$ as
\begin{equation}
 \max_{\cell \in \triangulation} \left|u\left(x_{\cell}\right) - u^h\left(x_{\cell}\right) \right| \eqdot
\end{equation}
\Cref{fig:convergence-deg0} shows the middle point error of the \HDG[0] method
as a function of $h$. In this case, we see that using $\tau = \tau_0$ the error decreases with one order of convergence more than with any other fixed choice of $\tau$. This is a property that is weel known for the Scharfetter--Gummel scheme applied to the finite volume methods.

\bibliographystyle{abbrv}
\bibliography{references}

\end{document}